\numberwithin{equation}{section}
\theoremstyle{plain}
\newtheorem{theorem}{Theorem}[section]
\newtheorem{lemma}[theorem]{Lemma}
\newtheorem{proposition}[theorem]{Proposition}
\theoremstyle{definition}
\newtheorem{definition}[theorem]{Definition}
\newtheorem{example}[theorem]{Example}
\theoremstyle{remark}
\newtheorem{remark}[theorem]{Remark}
\newcommand{\RQMOD}{\ensuremath{{RQ\text{-}\!\mathcal{M}od}}} 
\newcommand{\RMOD}{\ensuremath{{R\text{-}\!\mathcal{M}od}}} 
\newcommand{\Z}{\ensuremath{\mathbb{Z}}}  
\newcommand{\Q}{\ensuremath{\mathbb{Q}}}
\newcommand{\Hom}{\operatorname{Hom}}
\newcommand{\Image}{\operatorname{Im}} 
\newcommand{\Ker}{\operatorname{Ker}}
\newcommand{\A}{\ensuremath{A}}
\newcommand{\X}{\ensuremath{X}}
\newcommand{\Y}{\ensuremath{Y}}
\newcommand{\coclosed}[2]{\ensuremath{\xymatrix@1{ #1 \, \ar@{^{(}->}[r]^-{cc} &
#2}}}
\newcommand{\cosmall}[3]{\ensuremath{\xymatrix@1{ #1 \,
\ar@{^{(}->}[r]^-{cs}_-{#2} & #3}}}
\newcommand{\ShortExactSequence}[5]{\ensuremath{\xymatrix@1{ 0 \ar[r] &  #1 \ar[r]^-{#2} & #3 \ar[r]^-{#4} &  #5
\ar[r] & 0 }}}
 \newcommand{\Ext}{\operatorname{Ext}} 
 \newcommand{\To}{\longrightarrow}   
\newcommand{\T}{\mathcal{T}}
\newcommand{\F}{\mathcal{F}}
\newcommand{\ZMOD}{\ensuremath{{\Z\text{-}\!\mathcal{M}od}}} 
\begin{document}
\pagestyle{myheadings}
\enlargethispage{\baselineskip}
\title{\vspace{-2cm} Relative homological algebra in categories of representations of infinite quivers}


\author{Sergio Estrada\footnote{Departamento de Matem\'{a}tica Aplicada, Facultad de Inform\'{a}tica, Universidad de Murcia, Murcia, Spain.
e-mail: \texttt{sestrada@um.es}.} \footnote{The author has been
partially supported by DGI MTM2008-03339, by the Fundaci\'on Seneca
07552-GERM and by the Junta de Andaluc\'{\i}a. },
 \hspace{0.3em}  Salahattin \"{O}zdemir\footnote{
            Dokuz Eyl\"{u}l \"{U}niversitesi,
            Fen-Edebiyat Fak\"{u}ltesi,  Matematik B\"{o}l\"{u}m\"{u}, Buca, \.Izmir, Turkey.
            e-mail: \texttt{salahattin.ozdemir@deu.edu.tr}.} \footnote{The author has been supported by The Council of Higher Education (Y\"{O}K) and
 by The Scientific Technological Research Council of Turkey (T\"{U}B\.{I}TAK). }}

 \maketitle
\renewcommand{\theenumi}{\arabic{enumi}}
\renewcommand{\labelenumi}{\emph{(\theenumi)}}

\begin{abstract}
In the first part of this paper, we prove the existence of torsion
free covers in the category of representations of quivers, $(Q,
\RMOD)$, for a wide class of quivers included in the class of the
so-called source injective representation quivers provided that any
direct sum of torsion free and injective $R$-modules is injective.
In the second part, we prove the existence of
$\mathscr{F}_{cw}$-covers and $\mathscr{F}_{cw}^{\perp}$-envelopes
for any quiver $Q$ and any ring $R$ with unity, where
$\mathscr{F}_{cw}$ is the class of all ``componentwise" flat
representations of $Q$. \vspace{0.5cm}\\ \emph{Key Words:} cover;
envelope; torsion free; flat; representations of a quiver.
\vspace{0.5cm}
\\ \emph{2000 Mathematics Subject Classification:} Primary 16G20;
Secondary 16D90

\end{abstract}

\section{Introduction}

The aim of this paper is to continue with a program initiated in
\citet{Enochs-Herzog:HomotopyOfQuiverMorphismsWithApplicationsToRepresentations}
and continued in
\citet{Enochs-et.al:FlatCoversofRepresentationsOfTheQuiverA},
\citet{Enochs-et.al:FlatCoversAndFlatRepresentationsOfOuivers},
\citet{Enochs-Estrada:ProjectiveRepresentationsOfQuivers} and
\citet{Enochs-et.al:InjectiveRepresentationsOfQuivers} to develop
new techniques on the study of representations by modules over
(possibly infinite) quivers. In contrast to the classical
representation theory of quivers motivated by Gabriel's work
(\citet{Gabriel:UnzerlegbareDarstellungen}), we do not assume that
the base ring is an algebraically closed field and that all vector
spaces involved are finite dimensional.

Techniques on representation theory of infinite quivers have
recently proved to be very useful in leading to simplifications of
proofs as well as the descriptions of objects in related categories.
For instance, in
\citet{Enochs-Estrada:RelativeHomologicalAlgebraInTheCategoryOfQuasi-CoherentSheaves}
it is shown that the category of quasi-coherent sheaves on an
arbitrary scheme is equivalent to a category of representations of a
quiver (with certain modifications on the representations). And this
point of view allows to introduce new versions of homological
algebra in such categories (see
\citet[$\S$5]{Enochs-Estrada:RelativeHomologicalAlgebraInTheCategoryOfQuasi-CoherentSheaves}
and \citet{Enochs-et.al:GeneralizedQuasi-CoherentSheaves}). Infinite
quivers also appear when considering the category of $\Z$-graded
modules over the graded ring $R[x]$. This category is equivalent to
the category of representations over $R$ of the quiver
$A_{\infty}^{\infty}\, \equiv \cdots \to \bullet \to \bullet \to
\bullet \to \cdots .$ And, in general, one can find less trivial
example involving group rings $R[G]$ with the obvious grading.

Our goal on this paper is to introduce new classes in the category
of representations of a (possibly infinite) quiver to compute
(unique up to homotopy) resolutions which give rise to new versions
of homological algebra on it.

The first of such versions turns to Enochs' proof on the existence
of torsion free covers of modules over an integral domain (see
\citet{Enochs:Torsion-freeCoveringModules}) and its subsequent
generalization by Teply and Golan in
\citet{Teply:TorsionfreeInjectiveModules} and
\citet{Golan-Teply:Torsion-freeCovers} to more general torsion
theories in $\RMOD$. In the first part of the paper we prove that
torsion free covers exist for a wide class of quivers included in
the class of the so-called source injective representation quivers
as introduced in
\citet{Enochs-et.al:InjectiveRepresentationsOfQuivers}. This
important class of quivers includes all finite quivers with no
oriented cycles, but also includes infinite line quivers:
$$\xymatrix{ A_{\infty} \, \equiv & \cdots \ar[r] & \bullet \ar[r] & \bullet \ar[r] &  \bullet} ,$$
$$\xymatrix{ A^{\infty} \, \equiv & \bullet \ar[r] & \bullet \ar[r] & \bullet \ar[r] &  \cdots} ,$$
$$\xymatrix{ A_{\infty}^{\infty} \, \equiv & \cdots \ar[r] & \bullet \ar[r] & \bullet \ar[r] &  \cdots} $$
and quiver of pure semisimple type as introduced by Drozdowski and
Simson in \citet{Simson-Drozdowski:QuiversOfPureSemisimpleType}.

On the second part, we will focus on the existence of a version of
relative homological algebra by using the class of componentwise
flat representations in $(Q, \RMOD)$. Recently, it has been proved
by Rump that flat covers do exist on each abelian locally finitely
presented category (see
\citet{Rump:FlatCoversInAbelianAndInNon-abelianCategories}). Here by
``flat" the author means Stenstr\"{o}m's concept of flat object
(\citet{Stenstrom:PurityInFunctorCategories}) in terms of the Theory
of Purity that one can always define in a locally finitely presented
category (see
\citet{Crawley-Boevey:LocallyFinitelyPresentedAdditiveCategories}).
We call such flat objects ``categorical flat". For abelian locally
finitely presented categories with enough projectives, this notion
of ``flatness" is equivalent to be direct limit of certain
projective objects. As $(Q, \RMOD)$ is a locally finitely presented
Grothendieck category with enough projectives we infer by using
Rump's result that $(Q, \RMOD)$ admits ``categorical flat" covers
for any quiver $Q$ and any associative ring $R$ with unity. But
there are categories in which there is a classical notion of
flatness having nothing to do with respect to a Theory of Purity.
This is the case of the notion of ``flatness" in categories of
presheaves or quasi-coherent sheaves, where ``flatness" is more
related with a ``componentwise" notion. Those categories may be
viewed as certain categories of representations of quivers, so we
devote the second part to prove the existence of ``componentwise"
flat covers for any quiver and any ring $R$ with unity. In
particular if $X$ is a topological space, an easy modification of
our techniques can prove the existence of a flat cover (in the
algebraic geometrical sense) for any presheaf on $X$ over $\RMOD$.

\section{Preliminaries}

All rings considered in this paper will be associative with identity
and, unless otherwise specified, not necessarily commutative. The
letter $R$ will usually denote a ring. All $R$-modules are left
unitary modules, and all torsion theories considered for $\RMOD$ are
\emph{hereditary}, that is, the torsion class is closed under
submodules, or equivalently, the torsion free class is closed under
injective envelopes, and \emph{faithful} that is, $R$ is torsion
free. We refer to \citet{Enochs-Jenda:RelativeHomologicalAlgebra}
and
\citet{Simson-et.al:ElementsOfTheRepresentationTheoryOfAssociativeAlgebras}
for any undefined notion on covers and envelopes or quivers used in
the text.

A \emph{quiver} is a directed graph whose edges are called arrows.
As usual we denote a quiver by $Q$ understanding that $Q= (V, E)$
where $V$ is the set of vertices and $E$ the set of arrows. An arrow
of a quiver from a vertex $v_1$ to a vertex $v_2$ is denoted by $a:
v_1 \to v_2$. In this case we write $s(a) = v_1$ the initial
(starting) vertex and $t(a) = v_2$ the terminal (ending) vertex. A
\emph{path} $p$ of a quiver $Q$ is a sequence of arrows $a_n \cdots
a_2 a_1$ with $t(a_i) = s(a_{i+1})$. Thus $s(p) = s(a_1)$ and $t(p)
= t(a_n)$. Two paths $p$ and $q$ can be composed, getting another
path $qp$ (or $pq$) whenever $t(p) = s(q)$ ($t(q) = s(p)$).

A quiver $Q$ may be thought as a category in which the objects are
the vertices of $Q$ and the morphisms are the paths of $Q$.

A representation by modules $\X$ of a given quiver $Q$ is a functor
$\X: Q \To \RMOD$. Such a representation is determined by giving a
module $\X(v)$ to each vertex $v$ of $Q$ and a homomorphism $\X(a):
\X(v_1) \to \X(v_2)$ to each arrow $a: v_1 \to v_2$ of $Q$. A
morphism $\eta$ between two  representations $\X$ and $\Y$ is a
natural transformation, so it will be a family  $\eta_v$ such that
$\Y(a)\circ \eta_{v_1} = \eta_{v_2} \circ \X(a)$ for any arrow
$a:v_1 \to v_2$ of $Q$. Thus, the representations of a quiver $Q$ by
modules over a ring $R$ form a category, denoted by $(Q, \RMOD)$.

For a given quiver $Q$ and a ring $R$, the path ring of $Q$ over
$R$, denoted by $RQ$, is defined as the free left $R$-module, whose
base are the paths $p$ of $Q$, and where the multiplication is the
obvious composition between two paths. This is a ring with enough
idempotents, so in fact it is a ring with local units (see
\citet[Ch.10, \S 49]{Wisbauer:FoundationsofModuleandRingTheory}). We
denote by $\RQMOD$ the category of unital $RQ$-modules (i.e. $_{RQ}
M$ such that $RQM = M$). It is known that $RQ$ is a projective
generator of the category and that the categories $\RQMOD$ and $(Q,
\RMOD)$ are equivalent categories, so  $(Q, \RMOD)$ is Grothendieck
category with enough projectives.

For a given quiver $Q$, one can define a family of projective
generators from an adjoint situation as it is shown  in
\citet{Mitchell:RingsWithSeveralObjects}. For every vertex $v\in V$
and the embedding morphism $\{v\} \subseteq Q$ the family $\{S_v(R)
: v\in V\}$ is a family of projective generators of $Q$ where the
functor $S_v : \RMOD \To (Q, \RMOD)$ is defined in \citet[\S
28]{Mitchell:RingsWithSeveralObjects} as $S_v (M)(w) = \oplus_{Q(v,
w)} M$ where $Q(v, w)$ is the set of paths of $Q$ starting at $v$
and ending at $w$. Then $S_v$ is a left adjoint functor of the
evaluation functor $T_v: (Q, \RMOD) \To \RMOD$ given by $T_v (\X) =
\X(v)$ for any representation $\X\in (Q, \RMOD)$.

Let $\mathscr{F}$ be a class of objects in an abelian category
$\mathscr{A}$. Recall from
\citet{Enochs:InjectiveAndFlatCovers-EnvelopesAndResolvents} that,
an $\mathscr{F}$-precover of an object $C$ is a morphism $\psi: F
\to C$ with $F\in \mathscr{F}$ such that $\Hom (F', F) \to \Hom (F',
C) \to 0$ is exact for every $F' \in \mathscr{F}$. If, moreover,
every morphism $f: F \to F$ such that $\psi \circ f = \psi$ is an
automorphism, then $\psi$ is said to be an $\mathscr{F}$-cover.
$\mathscr{F}$-preenvelopes and $\mathscr{F}$-envelopes are defined
dually.

Throughout this paper, by a representation of a quiver we will mean
a representation by modules over a ring $R$.

During this paper we consider the following properties:
\begin{itemize}

\item[(\textbf{A})] Any direct sum of torsion free and injective modules is injective.
 \item[(\textbf{B})] For each vertex $v$ of $Q$, the set $\{t(a) \mid
s(a)= v \}$ is finite.
\end{itemize}

\section{Torsion free covers in the category of representations relative to a torsion theory}

Throughout this section  $Q$ will be a \emph{source injective
representation quiver} (see \citet[Definition
2.2]{Enochs-et.al:InjectiveRepresentationsOfQuivers}), that is, for
any ring $R$ any \emph{injective} representation $\X$ of $(Q,
\RMOD)$ can be characterized in terms of the following conditions:

\begin{itemize}
    \item[(i)]$\X (v)$ is injective $R$-module, for any vertex $v$
    of $Q$.
    \item[(ii)] For any vertex $v$ the morphism $$\X (v) \To \prod_{s(a)= v} \X
    (t(a))$$ induced by $\X (v) \To \X (t(a))$ is a splitting
    epimorphism.
\end{itemize}

\begin{example}
\begin{enumerate}
    \item Each quiver with a finite number of vertices and without
    oriented cycles is source injective.
    \item The infinite line quivers:
$$\xymatrix{ A_{\infty} \, \equiv & \cdots \ar[r] & \bullet \ar[r] & \bullet \ar[r] &  \bullet} ,$$
$$\xymatrix{ A^{\infty} \, \equiv & \bullet \ar[r] & \bullet \ar[r] & \bullet \ar[r] &  \cdots} ,$$
$$\xymatrix{ A_{\infty}^{\infty} \, \equiv & \cdots \ar[r] & \bullet \ar[r] & \bullet \ar[r] &  \cdots}
$$are source injective representation quivers.
    \item Infinite barren trees are source injective representation
    quivers, where a tree $T$ with root $v$ is said to be \emph{barren} if the number
of vertices $n_i$ of the $i'$th state of $T$ is finite for every
$i\in \mathbb{N}$ and the sequence of positive natural numbers $n_1,
n_2, \ldots$ stabilizes (see \citet[Corollaries
5.4-5.5]{Enochs-et.al:InjectiveRepresentationsOfQuivers}). For
example the tree:

$$\xymatrix{  &  \bullet \ar[r] &  \bullet \ar[r] & \cdots \\
\bullet \ar[r]  \ar[ur] \ar[dr]  &  \bullet \ar[r] & \bullet \ar[r]
& \cdots
\\ &  \bullet \ar[r] & \bullet \ar[r] & \cdots }$$ is barren.

\item The quiver \, \setlength{\unitlength}{1cm} $\bullet$
\begin{picture}(1,1) \put(0,0){\vector(1,0){1}}
\put(0.4,0.3){$\quad$} \put(0,0.2){\vector(1,0){1}}
\put(0.4,-0.35){$\vdots$} \put(0,0.4){\vector(1,0){1}}
\put(0.6,0.4){$\vdots$}
\end{picture} $\bullet$  $\quad$ is source injective, but does not satisfy
(\textbf{B}).
\end{enumerate}
\end{example}

\begin{example}
The $n$-loop, that is, a loop with $n$ vertices, is not a source
injective representation quiver. To see this, let $v_i$ be a vertex
and $a_i : v_i \to v_{i+1}$ be an arrow of the quiver for all $i= 1,
2, \ldots, n $ where $v_{n+1}= v_1$. Now consider the representation
$\X$ defined as follows: $\X(v_i) = E \times \cdots \times E$ ($n$
times) where $E$ is an injective $R$-module and $\X(a_i)(x_1, \ldots
x_n) = (x_n, x_1, \ldots, x_{n-1})$ where $x_i \in E$ for all $i =
1, \ldots n$. Then it is clear that $\X$ satisfies the conditions
(i) and (ii) of the source injective representations quiver. But
$\X$ is not an injective representation since it is not a divisible
$RQ$-module. This is because, there is a nonzero element $( a_n
a_{n-1}\cdots a_1 + a_1 a_n \cdots a_2 + \cdots + a_{n-1}
a_{n-2}\cdots a_n )- 1$ of $RQ$ such that $$[( a_n a_{n-1}\cdots a_1
+ a_1 a_n \cdots a_2 + \cdots + a_{n-1} a_{n-2}\cdots a_n )- 1]\cdot
m = 0$$ for every element $m = (m_1, \ldots, m_n)$ where $m_i \in  E
\times \cdots \times E$ for all $i=1, 2, \ldots, n$ (notice that if
$x \in \X$ then $x \in E \times \cdots \times E $).
\end{example}

 We recall the definition of a torsion theory.
\begin{definition}\citep{Dickson:TorsionTheoryForAbelianCategories}
A \emph{torsion theory} for an abelian category $\mathcal{C}$ is a
pair $(\T, \F)$ of classes of objects of $\mathcal{C}$ such that
\begin{enumerate}
    \item[(1)] $\Hom (T, F) = 0$ for all $T\in \T$, $F\in \F$.
    \item[(2)] If $\Hom (C, F) = 0$ for all $F\in \F$, then $C\in
    \T$.
    \item[(3)] If $\Hom (T, C) = 0$ for all $T \in \T$, then $C\in \F$.
\end{enumerate}
Here, $\T$ is called \emph{torsion class} and its objects are
\emph{torsion}, while $\F$ is called \emph{torsion free class} and
its objects are \emph{torsion free}. A torsion theory is called
\emph{hereditary} if the torsion class is closed under subobjects.
\end{definition}

Now let $(\T, \F)$ be a torsion theory for $\RMOD$. Then we can
define a torsion theory $(\mathcal{T}_{cw}, \mathcal{F}_{cw})$ for
$(Q, \RMOD)$, by defining the torsion class such that $\X \in
\T_{cw}$  if and only if $\X (v)\in \T$ for all $v\in V$. This is
because $\T_{cw}$ is closed under quotient representations, direct
sums and extensions (as so is $\T$) (see, for example, \citet[VI,
Proposition 2.1]{Stenstrom:RingsOfQuotients}).

\begin{remark}
Since the torsion class $\T_{cw}$ is closed under
subrepresentations, the torsion theory $(\T_{cw}, \F_{cw})$ is
hereditary.
\end{remark}

\begin{proposition}\label{prop:ArepresentationTorsion-freeIFFeachModuleTORSION-free}
Let $\X \in (Q, \RMOD)$. Then  $\X \in \F_{cw}$  if and only if $\X
(v) \in \F$ for all $v\in V$.
\end{proposition}
\begin{proof}
$(\Rightarrow)$ Let $\X \in \F_{cw}$. Then for any $M \in \T$, we
have
$$\Hom_R (M, T_v(\X)) \cong \Hom_Q (S_v (M), \X) = 0$$ since $S_v (M) \in \T_{cw}$ (as $\T$ is closed under direct sums). Thus $\X (v)= T_v(\X) \in \F$ for all $v\in V$.

$(\Leftarrow)$ Suppose that $\X(v) \in \F$ for all $v\in V$. Let $\A
\in \T_{cw}$. If $\gamma : \A \To \X$ is a morphism of
representations, then we have module homomorphisms $\gamma_v : \A
(v) \to \X (v)$ for all $v\in V$. Since $\A(v) \in \T$, then $\Hom_R
(\A (v), \X (v))=0$ and so $\gamma _v = 0$ for all $v\in V$. Thus
$\gamma =0$, that is, $\Hom_Q (\A, \X) = 0$. This means that $\X \in
\F_{cw}$.

\end{proof}

\begin{theorem}\label{thm:thereAreEnoughTorsion-freeInjectives}
Any representation of $\F_{cw}$ can be embedded in a torsion free
and injective representation.
\end{theorem}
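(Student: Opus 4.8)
The plan is to reduce the problem to the componentwise data supplied by Proposition~\ref{prop:ArepresentationTorsion-freeIFFeachModuleTORSION-free} and then reassemble that data into an injective representation by exploiting the source injective characterization (i)--(ii). First I would note that if $\X\in\F_{cw}$, then $\X(v)\in\F$ for every vertex $v$, and I would embed each $\X(v)$ into its injective envelope $E(\X(v))$ in $\RMOD$. Since the torsion theory $(\T,\F)$ on $\RMOD$ is hereditary, $\F$ is closed under injective envelopes, so every $E(\X(v))$ is a \emph{torsion free injective} $R$-module. The whole point is now to glue these envelopes into a representation that remains injective.

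To do this I would use the right adjoint of the evaluation functor $T_v$ (the dual of the functor $S_v$): for an $R$-module $N$ the coinduced representation is given on vertices by $\prod_{q\in Q(w,v)}N$, where $Q(w,v)$ is the set of paths from $w$ to $v$. Setting
\[
\mathscr{E}(w)\;=\;\prod_{q\ \text{path starting at}\ w} E\bigl(\X(t(q))\bigr),
\]
with the arrow map $\mathscr{E}(a)\colon\mathscr{E}(w)\to\mathscr{E}(t(a))$ sending a family $(x_q)_q$ to $(x_{q'a})_{q'}$, I obtain a representation $\mathscr{E}\cong\prod_{v}e_v\bigl(E(\X(v))\bigr)$. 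The key structural identity is that the non-trivial paths starting at $w$ are exactly the composites $q'a$ with $a$ an arrow out of $w$ and $q'$ a path from $t(a)$; hence the canonical map $\mathscr{E}(w)\to\prod_{s(a)=w}\mathscr{E}(t(a))$ is simply the projection deleting the trivial-path coordinate $E(\X(w))$, so it is a \emph{split} epimorphism, establishing condition (ii). Condition (i) is immediate: each $\mathscr{E}(w)$ is a product of injective $R$-modules, hence injective. By the source injective characterization, $\mathscr{E}$ is therefore an injective representation of $(Q,\RMOD)$.

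It remains to check torsion-freeness and the embedding. Each $\mathscr{E}(w)$ is a product of torsion free modules and $\F$ is closed under products and submodules, so $\mathscr{E}(w)\in\F$ for all $w$; by Proposition~\ref{prop:ArepresentationTorsion-freeIFFeachModuleTORSION-free} this gives $\mathscr{E}\in\F_{cw}$. For the embedding I would define $\iota_w\colon\X(w)\to\mathscr{E}(w)$ by $\iota_w(m)_q=\X(q)(m)$, viewed inside $E(\X(t(q)))$; naturality follows from $\X(q'a)=\X(q')\X(a)$, and $\iota_w$ is a monomorphism because its trivial-path coordinate is the inclusion $\X(w)\hookrightarrow E(\X(w))$. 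Thus $\X$ embeds in the torsion free injective representation $\mathscr{E}$.

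The step I expect to require the most care is the bookkeeping that makes $\mathscr{E}$ a genuine representation and verifies that the map in (ii) is split epi via the path-reindexing above; everything else is a formal consequence of closure properties of $\F$ and of products of injectives. I would also remark on two points. First, a fully abstract alternative is available: since $(\T_{cw},\F_{cw})$ is hereditary, the torsion free class $\F_{cw}$ is closed under injective envelopes in the Grothendieck category $(Q,\RMOD)$, so the categorical injective envelope $E(\X)$ is already the desired object. Second, if one prefers to build $\mathscr{E}$ with direct sums $\bigoplus$ rather than products over the paths (as is convenient for the later torsion free cover argument), then condition (i) is no longer automatic, and it is precisely property~\textbf{(A)} --- that any direct sum of torsion free and injective $R$-modules is injective --- that rescues the argument, with property~\textbf{(B)} ensuring $\bigoplus$ and $\prod$ agree in (ii).
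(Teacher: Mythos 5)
Your argument is correct, but it is a genuinely different and much more explicit route than the one the paper takes. The paper's entire proof is the abstract observation you relegate to your first closing remark: $(Q,\RMOD)$ is a Grothendieck category with enough injectives, the torsion theory $(\T_{cw},\F_{cw})$ is hereditary, and therefore $\F_{cw}$ is closed under injective envelopes by Dickson's theorem, so the injective envelope of $\X$ already does the job. What you do instead is construct a concrete torsion free injective container $\mathscr{E}\cong\prod_{v}e_v\bigl(E(\X(v))\bigr)$ out of the coinduced (right adjoint) representations, verify conditions (i)--(ii) by the path-reindexing $q=q'a$, and embed $\X$ via $\iota_w(m)_q=\X(q)(m)$; all of these steps check out (the splitting in (ii) is indeed projection away from the trivial-path coordinate, $\F$ is closed under products and submodules so Proposition~\ref{prop:ArepresentationTorsion-freeIFFeachModuleTORSION-free} applies, and the trivial-path coordinate of $\iota_w$ witnesses injectivity). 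The paper's approach buys brevity and does not even need the source injective hypothesis beyond the ambient category having enough injectives; yours buys an explicit description of the embedding, and in fact injectivity of $\mathscr{E}$ could be obtained for an arbitrary quiver directly from the adjunction ($e_v$ is right adjoint to the exact functor $T_v$, hence preserves injectives), so your appeal to the source injective characterization is a convenience rather than a necessity. Your final remark about direct sums, property (\textbf{A}) and property (\textbf{B}) is accurate but pertains to the later Lemma~\ref{lemma:direct-union-of-torsion-freeANDinjective-rep.is-torsion-free-injective} rather than to this theorem.
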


\begin{proof}
Let $\X \in \F_{cw}$ be any representation of $Q$. Since $(Q,
\RMOD)$ has enough injectives and $(\T_{cw}, \F_{cw})$ is
hereditary, then $\F_{cw}$ is closed under injective envelopes (see
\citet[Theorem 2.9]{Dickson:TorsionTheoryForAbelianCategories}).
Thus $\X$ can be embedded in its torsion free injective envelope.


\end{proof}


\begin{lemma}\label{lemma:WeCanTakeInjectiveRep.ToBeCovered}
Let $\X, \X', \Y$ and $Z$ be representations of $Q$. Then
\begin{itemize}
\item[(i)] If $\X$ has an $\F_{cw}$-precover and
$Z \subseteq \X$, then $Z$ also has an $\F_{cw}$-precover.
\item[(ii)] If $\X$ is injective, then $\psi: \X' \To \X$ is an $\F_{cw}$-precover of $\X$ if and only if for every morphism
$\phi : \Y \To \X$ with $\Y
\in \F_{cw}$ and $\Y$ injective, there exists $f:\Y \To \X'$ such
that $\psi \circ f = \phi$.
\end{itemize}
\end{lemma}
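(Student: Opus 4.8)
The plan is to use a precover of $\X$ to construct one for a subrepresentation $Z$ by pulling back. Given an $\F_{cw}$-precover $\psi : \X' \To \X$, I would form the pullback of $\psi$ along the inclusion $Z \subseteq \X$, obtaining a representation $P$ together with maps $P \To Z$ and $P \To \X'$. The candidate precover for $Z$ is the projection $P \To Z$. To check this is an $\F_{cw}$-precover, I need $P \in \F_{cw}$: since $P$ sits inside $Z \times \X'$ and $\F_{cw}$ is closed under subrepresentations (the torsion theory is hereditary by the Remark) and under finite products, this holds. The lifting property for $\F_{cw}$-objects then follows from the universal property of the pullback combined with the precover property of $\psi$: any $F' \To Z$ with $F' \in \F_{cw}$ composes to $F' \To \X$, lifts through $\psi$ to $F' \To \X'$ by the precover property, and the two maps into $Z$ and $\X'$ induce a map $F' \To P$ by the pullback universal property, as required.

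**Part (ii):** Here I must show that for \emph{injective} $\X$, testing the lifting property only against injective torsion free representations $\Y$ suffices. The forward direction is immediate: if $\psi$ is an $\F_{cw}$-precover, the defining property gives a lift for every $\Y \in \F_{cw}$, in particular for the injective ones. The substantive direction is the converse. Suppose $\psi : \X' \To \X$ admits a lift against every injective $\Y \in \F_{cw}$; I must produce a lift for an \emph{arbitrary} $\Y \in \F_{cw}$. The idea is to embed $\Y$ into a well-chosen injective torsion free representation. By Theorem~\ref{thm:thereAreEnoughTorsion-freeInjectives}, any $\Y \in \F_{cw}$ embeds in its torsion free injective envelope $E(\Y)$, so take $\iota : \Y \hookrightarrow E(\Y)$ with $E(\Y) \in \F_{cw}$ injective.

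Given $\phi : \Y \To \X$, I would extend $\phi$ across $\iota$ using injectivity of $\X$: since $\X$ is injective and $\iota$ is a monomorphism, there is $\bar{\phi} : E(\Y) \To \X$ with $\bar{\phi} \circ \iota = \phi$. Now $E(\Y)$ is injective and lies in $\F_{cw}$, so by hypothesis there exists $g : E(\Y) \To \X'$ with $\psi \circ g = \bar{\phi}$. Setting $f = g \circ \iota : \Y \To \X'$ gives $\psi \circ f = \psi \circ g \circ \iota = \bar{\phi} \circ \iota = \phi$, which is exactly the required lift.

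The main obstacle I anticipate is ensuring the torsion free injective envelope used in part (ii) genuinely stays inside $\F_{cw}$; this is precisely what Theorem~\ref{thm:thereAreEnoughTorsion-freeInjectives} (together with hereditariness of the torsion theory) guarantees, so the construction goes through cleanly. In part (i) the only point requiring care is the verification that the pullback $P$ belongs to $\F_{cw}$, which reduces to closure of the torsion free class under subobjects and finite products.
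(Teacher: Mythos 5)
Your proof is correct and follows essentially the same route as the paper: in (i) your pullback $P$ is exactly the preimage $\psi^{-1}(Z)\subseteq\X'$ that the paper uses (so membership in $\F_{cw}$ follows already from closure of the torsion free class under subrepresentations, no product argument needed), and in (ii) you embed into a torsion free injective via Theorem~\ref{thm:thereAreEnoughTorsion-freeInjectives}, extend by injectivity of $\X$, lift, and restrict, just as the paper does.
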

\begin{proof}
\begin{itemize}
    \item[(i)]Let $\psi: \X' \To \X$ be an $\F_{cw}$-precover.
    Consider the morphism $\psi_1 : \psi^{-1}(Z) \To
    Z$. Then $\psi^{-1}(Z)\in \F_{cw}$ since $\F_{cw}$ is closed under subrepresentations. Now for any morphism $\phi : \Y \To Z$
    with $\Y \in \F_{cw}$, there is a
    morphism $f: \Y \To \X'$ such that $\psi f = \phi$. Therefore, $f(\Y) \subset
    \psi^{-1}(Z)$ and so $\phi$ can be factored through
    $\psi_1$.

    \item[(ii)] The condition is clearly necessary. Let $\phi_1 : \Y_1 \To \X$ be a morphism with $\Y_1 \in \F_{cw}$.
    Then by
    Theorem \ref{thm:thereAreEnoughTorsion-freeInjectives}, $\Y_1$ can be embedded in a representation $\Y \in \F_{cw}$ which is injective. Now since $\X$ is injective, there is a morphism $\phi :\Y \To
    \X$ such that $\phi \mid_{\Y_1} = \phi_1$. So, by hypothesis,
    there exists a morphism $f: \Y \To \X'$ such that $\psi f =
    \phi$. It follows that $(\psi f)\mid_{\Y_1} = \phi \mid_{\Y_1} = \phi_1$.
\end{itemize}
\end{proof}

\begin{lemma}\label{lemma:if-any-direct-sumOf-modulesIs-injectiveThen-so-is-Direct-union}
 Let $E$ be an $R$-module and let $\{E_i\}_{i\in I}$ be a direct family of submodules of $E$. If $\oplus_{i\in I}
 E_i$ is injective, then $\sum_{i\in I} E_i$ is injective.
\end{lemma}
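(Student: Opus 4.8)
The plan is to exhibit $\sum_{i\in I}E_i$ as a direct summand of the injective module $\bigoplus_{i\in I}E_i$, so that injectivity is inherited. First I would record two reductions. Each $E_i$ is the image of the canonical coordinate projection $\bigoplus_{j}E_j\To E_i$ and hence a direct summand of $\bigoplus_j E_j$; since a direct summand of an injective module is injective, \emph{every $E_i$ is itself injective}. Moreover, because the family is directed, any finite sum of elements chosen from various $E_i$ already lies in a single $E_k$, so $\sum_{i\in I}E_i=\bigcup_{i\in I}E_i$ is the \emph{directed union} of the $E_i$ (their direct limit along the inclusions). It is this directed union that I must prove injective.

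Next I would use the canonical summation epimorphism $\sigma\colon\bigoplus_{i}E_i\To\sum_i E_i$ sending $(x_i)$ to $\sum_i x_i$, and realize $\Ker\sigma$ as the image of a mapping-telescope differential. In the basic case of a chain indexed by $\mathbb{N}$ (or, after the reduction discussed below, by a limit ordinal) one forms $\delta=\mathrm{id}-s\colon\bigoplus_i E_i\To\bigoplus_i E_i$, where $s$ is induced by the inclusions $E_i\hookrightarrow E_{i+1}$, and checks that \ShortExactSequence{\bigoplus_{i}E_i}{\delta}{\bigoplus_{i}E_i}{\sigma}{\sum_{i}E_i} is exact. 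The decisive observation is then that $\delta$ is a \emph{monomorphism whose domain $\bigoplus_i E_i$ is injective}: consequently the identity of $\bigoplus_i E_i$ extends along $\delta$, so $\delta$ is a split monomorphism and its cokernel $\sum_i E_i$ is a direct summand of $\bigoplus_i E_i$. Being a direct summand of an injective module, $\sum_i E_i$ is injective, as required.

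The main obstacle is that this telescope presupposes successors, so it does not apply verbatim to an arbitrary \emph{direct} family. For a general chain I would pass to a cofinal well-ordered subfamily (every totally ordered set admits one): if that subfamily has a largest element then $\sum_i E_i$ coincides with that single $E_i$ and is injective outright, while otherwise it is indexed by a limit ordinal, for which $\delta$ is again a well-defined monomorphism with $\Image\delta=\Ker\sigma$; the sub-sum over the cofinal subfamily stays injective, being a summand of $\bigoplus_i E_i$. The genuinely delicate point, and where I expect the real work to lie, is the step beyond chains: for a directed but non-linearly-ordered family the cleanest route is to argue directly that $\Ker\sigma$ is a direct summand of $\bigoplus_i E_i$ (equivalently, that the obstruction to splitting $\sigma$ vanishes), and it is precisely here that the injectivity of $\bigoplus_i E_i$ must be exploited in full. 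Since in the intended applications the family arises as the union of a chain of submodules, the chain case above already suffices and is the part I would write out in detail.
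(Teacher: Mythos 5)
Your underlying strategy --- realize $\sum_{i\in I}E_i$ as a retract of the injective module $\oplus_{i\in I}E_i$ --- is at bottom the same one the paper uses: the paper's proof simply posits a section $\psi$ of the summation map $\varphi$ with $\varphi\psi=\mathrm{id}$ and then (somewhat redundantly, since $\varphi\psi=\mathrm{id}$ already makes $\sum_iE_i$ a direct summand of $\oplus_iE_i$) runs Baer's criterion, whereas you try to actually construct the retraction. Your telescope argument is correct and complete for chains of order type $\omega$: there $\delta=\mathrm{id}-s$ is a monomorphism with injective domain, hence split, so $\Coker\delta\cong\sum_iE_i$ is a direct summand of $\oplus_iE_i$ and therefore injective. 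Up to that point you have in fact supplied the justification that the paper's own one-line proof omits.

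However, there are two genuine gaps. First, the claim that for a chain indexed by an arbitrary limit ordinal $\lambda$ one still has $\Image\delta=\Ker\sigma$ fails once $\lambda>\omega$: telescoping the generators $e_\alpha x-e_{\alpha+1}x$ of $\Image\delta$ only produces elements $e_\alpha x-e_{\alpha+n}x$ with $n$ finite, so for $0\neq x\in E_0$ the element $e_0x-e_\omega x$ lies in $\Ker\sigma$ but not in $\Image\delta$ (a finitely supported preimage would need $z_n=x$ for all $n<\omega$). Since a chain may have uncountable cofinality --- and the chains $\Omega$ to which the lemma is applied in Theorem 3.9 are arbitrary --- passing to a cofinal well-ordered subchain does not reduce you to the $\omega$-case. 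Second, the lemma is stated, and used in Lemma 3.6 (where it is applied vertex-wise to a directed family of subrepresentations), for arbitrary directed families, and you explicitly leave that case open; ``argue directly that $\Ker\sigma$ is a direct summand of $\oplus_iE_i$'' is precisely the content of the lemma, not an argument for it. To be fair, the paper's proof buries exactly the same difficulty in the unexplained existence of the splitting $\psi$; but as written your proposal establishes the statement only for chains of countable cofinality.
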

\begin{proof}
Let $\varphi : \oplus E_i \to \sum E_i$ and $\psi : \sum E_i \to
\oplus E_i$ be homomorphisms of $R$-modules such that $\varphi \psi
= id$. Now for any ideal $A$ of $R$, any map $f: A \to \sum E_i$ can
be extended to $\varphi \circ h : R \to \sum E_i$ :
$$\xymatrix{A \ar@{^{(}->}[r] \ar[d]_f & R \ar@{-->}[dl]_{\varphi h} \ar@{-->}[ddl]^h  \\ \sum E_i \ar[d]^{\psi} &   \\ \oplus E_i \ar@/^/@{-->}[u]^{\varphi}  &   }$$
where $(\varphi \circ h )\mid_A = \varphi \circ(h \mid_A) = \varphi
\circ \psi \circ f = f$. Thus $\sum_{i\in I} E_i$ is injective.
\end{proof}

\begin{lemma}\label{lemma:direct-union-of-torsion-freeANDinjective-rep.is-torsion-free-injective}
Let $E$ be in $(Q, \RMOD)$ and let $\{E_i\}_{i\in I}$ be  a direct
family of injective subrepresentations of $E$ such that $E_i
\in\F_{cw}, \forall i \in I$. If $R$ satisfies (\textbf{A}) and if
$Q$ satisfies (\textbf{B}), then $\sum_{i\in I} E_i \in \F_{cw}$ and
it is injective.
\end{lemma}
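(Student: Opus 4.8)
The plan is to treat the two assertions separately and in each case to reduce to the vertexwise $R$-modules, using the source injective characterization (i)--(ii) together with Proposition \ref{prop:ArepresentationTorsion-freeIFFeachModuleTORSION-free}, and then to feed the resulting module families into Lemma \ref{lemma:if-any-direct-sumOf-modulesIs-injectiveThen-so-is-Direct-union}. Write $U := \sum_{i\in I} E_i$ and observe that, since $\{E_i\}$ is a direct (hence directed) family, $U(v) = \sum_i E_i(v) = \bigcup_i E_i(v)$ for every vertex $v$, a directed union of submodules of $E(v)$.

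For the torsion free assertion I would argue vertexwise. Each $E_i \in \F_{cw}$ gives $E_i(v)\in\F$ by Proposition \ref{prop:ArepresentationTorsion-freeIFFeachModuleTORSION-free}, and a directed union of torsion free modules is again torsion free: if $x\in \bigcup_i E_i(v)$ generates a torsion submodule, then $x\in E_i(v)$ for some $i$ and $Rx$ is a torsion submodule of the torsion free module $E_i(v)$, whence $x=0$ (here one uses that the theory is hereditary, so torsionness of $Rx$ is intrinsic to $x$). Thus $U(v)\in\F$ for all $v$, and Proposition \ref{prop:ArepresentationTorsion-freeIFFeachModuleTORSION-free} yields $U\in\F_{cw}$.

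For injectivity I would verify conditions (i) and (ii) of the source injective characterization for $U$. Since each $E_i$ is an injective representation, $E_i(v)$ is an injective $R$-module, and combined with $E_i(v)\in\F$ property (\textbf{A}) makes $\bigoplus_i E_i(v)$ injective; Lemma \ref{lemma:if-any-direct-sumOf-modulesIs-injectiveThen-so-is-Direct-union} applied to the directed family $\{E_i(v)\}_i$ then shows $U(v)=\bigcup_i E_i(v)$ is injective, which is condition (i). For condition (ii) I use property (\textbf{B}): the set of arrows out of $v$ is finite, so $\prod_{s(a)=v}U(t(a))$ is a finite direct sum $\bigoplus_{s(a)=v}U(t(a))$, and the structure map in question is $\theta_v=\varinjlim_i \theta_v^{(i)}$, where $\theta_v^{(i)}\colon E_i(v)\to\bigoplus_{s(a)=v}E_i(t(a))$ is the corresponding map for $E_i$. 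Each $\theta_v^{(i)}$ is a split epimorphism because $E_i$ is injective, so its kernel $K_i$ is a direct summand of the injective module $E_i(v)$, hence injective; moreover $\{K_i\}_i$ is directed, since the inclusions $E_i\subseteq E_j$ are morphisms of representations and therefore respect the structure maps, carrying $K_i$ into $K_j$.

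The main obstacle, and the step that genuinely consumes both hypotheses, is to promote these vertexwise splittings to a splitting of $\theta_v$ itself: the sections of the $\theta_v^{(i)}$ need not be compatible along the system, so one cannot simply pass to the colimit. Instead I would identify $\ker\theta_v=\bigcup_i K_i$ (directed colimits are exact in $\RMOD$) and invoke Lemma \ref{lemma:if-any-direct-sumOf-modulesIs-injectiveThen-so-is-Direct-union} a second time: because $\bigoplus_i K_i$ is a direct summand of the injective module $\bigoplus_i E_i(v)$ it is injective, so the lemma gives that $\ker\theta_v=\bigcup_i K_i$ is injective. An injective submodule is a direct summand, so $\ker\theta_v$ splits off from the injective module $U(v)$; as $\theta_v$ is an epimorphism (a directed colimit of the epimorphisms $\theta_v^{(i)}$), it follows that $\theta_v$ is a split epimorphism, establishing condition (ii). This route has the pleasant feature of needing only the conclusion of Lemma \ref{lemma:if-any-direct-sumOf-modulesIs-injectiveThen-so-is-Direct-union}, never a splitting of a summation map. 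With (i) and (ii) verified, the source injective characterization shows that $U$ is injective, completing the proof.
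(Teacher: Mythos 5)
Your proof is correct, and its overall skeleton coincides with the paper's: both verify conditions (i) and (ii) of the source injective characterization vertexwise, obtain (i) from hypothesis (\textbf{A}) together with Lemma \ref{lemma:if-any-direct-sumOf-modulesIs-injectiveThen-so-is-Direct-union}, use (\textbf{B}) to replace the product over the arrows leaving $v$ by a finite direct sum that commutes with directed unions, and deduce $\sum_{i}E_i\in\F_{cw}$ from Proposition \ref{prop:ArepresentationTorsion-freeIFFeachModuleTORSION-free}. The genuine divergence is at condition (ii). The paper simply asserts that ``taking the union of the splitting epimorphisms $E_i(v)\to\prod_{s(a)=v}E_i(t(a))$'' produces a splitting epimorphism for $\sum_i E_i$; as you correctly observe, this is not automatic, because the sections of the individual maps need not be compatible along the directed system, and a directed colimit of split epimorphisms need not split. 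Your repair --- identify the kernel of the colimit map with the directed union of the kernels $K_i$, note that each $K_i$ is an injective direct summand of $E_i(v)$ so that $\oplus_i K_i$ is a direct summand of the module $\oplus_i E_i(v)$, which is injective by (\textbf{A}), and then apply Lemma \ref{lemma:if-any-direct-sumOf-modulesIs-injectiveThen-so-is-Direct-union} a second time to conclude that the kernel is injective and hence a direct summand of the injective module $\big(\sum_i E_i\big)(v)$ --- is complete, correct, and makes explicit exactly where (\textbf{A}) and (\textbf{B}) enter. In short, you follow the paper's route but supply a rigorous justification for the one step the paper leaves unargued; what your version buys is precisely a proof of the splitting in (ii) that does not pretend the individual sections glue.
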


\begin{proof}
Since each $E_i$ is an injective representation such that $E_i \in
\F_{cw}$, then $E_i (v)$ is an injective module such that $E_i
(v)\in \F$,
 $\forall v\in V$ and  $\forall i \in I$. So $\oplus_{i\in I} E_i(v)$ is also an
injective module by hypothesis. By Lemma
\ref{lemma:if-any-direct-sumOf-modulesIs-injectiveThen-so-is-Direct-union},
 $\sum_{i\in I} E_i(v) $ is also injective. Then the
representation $\sum_{i\in I} E_i$ satisfies (i). Now taking the
union of the splitting epimorphisms $E_i (v) \to \prod_{s(a)= v} E_i
(t(a))$, we obtain the following splitting epimorphism:
$$\bigg(\sum_{i\in I} E_i\bigg)(v) \To \sum_{i\in I}
\prod_{s(a)= v} E_i (t(a))\cong \prod_{s(a)= v} \bigg(\sum_{i\in I}
E_i\bigg) (t(a)) $$ where the isomorphism follows since the product
is finite by hypothesis. This means $\sum_{i\in I} E_i$ is also
satisfies (ii). Thus it is an injective representation since $Q$ is
a source injective representation quiver. Finally, since $E_i(v) \in
\F$ then $\sum_{i\in I}E_i (v) \in \F$ for all $v\in V$, and so
$\sum_{i\in I} E_i \in \F_{cw}$.

\end{proof}

\begin{proposition}\label{prop:Qsatisfy(A)IFF-R-satisfy}
Let $Q$ be any quiver satisfying (\textbf{B}). Then $R$ satisfies
(\textbf{A}) if and only if  any direct sum of injective
representations of $\F_{cw}$ is injective.
\end{proposition}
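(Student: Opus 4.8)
The plan is to prove both implications separately, exploiting the characterization of injective representations over source injective representation quivers together with the closure lemmas already established. The forward direction (assuming $R$ satisfies (\textbf{A}), show any direct sum of injective representations in $\F_{cw}$ is injective) is the substantive part; the converse should follow by restricting to a suitable one-vertex situation.

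For the forward direction, I would start with a family $\{E_i\}_{i\in I}$ of injective representations with $E_i \in \F_{cw}$ and form the direct sum $E = \bigoplus_{i\in I} E_i$ in $(Q, \RMOD)$. The key observation is that direct sums in $(Q,\RMOD)$ are computed vertexwise, so $E(v) = \bigoplus_{i\in I} E_i(v)$ for each vertex $v$. Since each $E_i$ is injective and lies in $\F_{cw}$, Proposition~\ref{prop:ArepresentationTorsion-freeIFFeachModuleTORSION-free} and the source injective characterization give that each $E_i(v)$ is both torsion free and injective. By property (\textbf{A}), the direct sum $\bigoplus_{i\in I} E_i(v)$ is then an injective $R$-module, so $E(v)$ is injective and condition~(i) holds. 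For condition~(ii), I would use that each $E_i(v) \To \prod_{s(a)=v} E_i(t(a))$ is a splitting epimorphism and that (\textbf{B}) makes this product finite; taking the direct sum over $i$ of these split epimorphisms and commuting the finite product past the direct sum yields the required splitting epimorphism $E(v) \To \prod_{s(a)=v} E(t(a))$. Since $Q$ is source injective, conditions~(i) and~(ii) together force $E$ to be injective. This argument is essentially the direct-sum analogue of Lemma~\ref{lemma:direct-union-of-torsion-freeANDinjective-rep.is-torsion-free-injective}, with the direct-union bookkeeping replaced by direct-sum bookkeeping.

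For the converse, I would assume that any direct sum of injective representations in $\F_{cw}$ is injective and deduce property (\textbf{A}) for $R$. Given a family $\{M_i\}_{i\in I}$ of torsion free injective $R$-modules, the idea is to manufacture injective representations in $\F_{cw}$ whose values at some fixed vertex recover the $M_i$, so that the injectivity of their direct sum forces $\bigoplus_i M_i$ to be an injective $R$-module. The natural candidate is the coinduction (right adjoint to evaluation) producing, for a fixed vertex $v$, an injective representation built from $M_i$; one then evaluates at $v$ and uses that evaluation sends injective representations to injective modules together with the fact that a direct summand of an injective module is injective. The main point to verify is that these constructed representations genuinely lie in $\F_{cw}$ and are injective, and that the evaluation of their direct sum controls $\bigoplus_i M_i$.

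\emph{The hard part} will be the converse direction: producing representations in $\F_{cw}$ that are injective and whose vertexwise values let one detect injectivity of an arbitrary direct sum of torsion free injectives back in $\RMOD$. The subtlety is that the source injective characterization constrains the shape of injective representations (condition~(ii) couples each vertex to its successors), so one cannot freely prescribe the value at a single vertex without attending to the arrows out of it; here property (\textbf{B}) is essential to keep the coupling products finite and to ensure the constructed representations stay injective. In the forward direction the only delicate point is the commutation of the finite product with the direct sum in condition~(ii), which is precisely where hypothesis (\textbf{B}) is used and which fails without finiteness.
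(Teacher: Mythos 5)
Your forward direction coincides with the paper's: the authors simply say the proof is that of Lemma \ref{lemma:direct-union-of-torsion-freeANDinjective-rep.is-torsion-free-injective} with $\oplus E_i$ in place of $\sum E_i$, which is exactly the vertexwise check of condition (i) via (\textbf{A}) and of condition (ii) via commuting the finite product (finiteness being (\textbf{B})) past the direct sum, as you describe. Where you genuinely diverge is the converse. The paper disposes of it in one line by specializing to the one-vertex quiver $Q \equiv \cdot v$, for which $(Q, \RMOD)\cong \RMOD$ and the claim is a tautology; strictly speaking this proves the implication only under the reading that the proposition is quantified over all quivers satisfying (\textbf{B}), since the one-vertex quiver is not the $Q$ fixed in the statement. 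Your coinduction argument instead proves the converse for the given $Q$ itself, and the details you flag as ``the main point to verify'' do check out: the right adjoint $G_v$ of the evaluation functor $T_v$ satisfies $(G_v M)(w) = \prod_{Q(w,v)} M$, it sends injective modules to injective representations because $T_v$ is exact, it lands in $\F_{cw}$ because torsion free classes are closed under products, $M$ splits off $(G_v M)(v)$ as the factor indexed by the trivial path at $v$, and $T_v$ sends injective representations back to injective modules because its left adjoint $S_v$ is exact; hence injectivity of $\bigoplus_i G_v(M_i)$ forces injectivity of $\bigoplus_i M_i$. So your route costs more work but buys a proof of the statement as literally written (and, in that direction, uses neither (\textbf{B}) nor the source injective characterization), whereas the paper's one-liner is shorter but relies on replacing the quiver.
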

\begin{proof}
$(\Rightarrow)$ The proof is the same as the proof of Lemma
\ref{lemma:direct-union-of-torsion-freeANDinjective-rep.is-torsion-free-injective}
by taking $\oplus E_i$ instead of $\sum E_i$.

$(\Leftarrow)$ The proof is immediate by considering the quiver $Q
\equiv  \cdot v$ which trivially satisfies (\textbf{B}). This is
because $(Q, \RMOD) \cong \RMOD$ in this case.
\end{proof}

Note that, in the previous proposition, which will be useful in the
proof of the following theorem, we cannot omit the fact that $Q$
satisfy (\textbf{B}) as the following example shows.

\begin{example}\label{exm:QdoesntSatisfy(B)}
Consider the quiver $$\xymatrix{  &   &  v_1   \\
Q \equiv & v_0 \ar[r] \ar[ur]^{\vdots} \ar[dr]_{\vdots}  &  v_2   \\
 &   & v_3 }$$ which, of course, does not satisfy (\textbf{B})
 for the vertex $v_0$. For the ring of integers, $R = \Z$,
 consider the category $(Q, \ZMOD)$. Then the indecomposable
 injective and
 torsion free representations of $(Q, \ZMOD)$ (w.r.t usual torsion theory) are as follows:

$$\xymatrix{  &   &  0   \\
 &E_0\equiv  \Q \ar[r] \ar[ur]^{\vdots} \ar[dr]_{\vdots}  &  0 \, , \\
 &   & 0 }   \xymatrix{  &   &  \Q   \\
& E_1 \equiv \Q \ar[r] \ar[ur]^{\vdots} \ar[dr]_{\vdots}  &  0 \, ,  \\
 &   & 0 }  \xymatrix{  &   &  0  \\
&E_2 \equiv  \Q \ar[r] \ar[ur]^{\vdots} \ar[dr]_{\vdots}  &  \Q  \cdots\\
 &   & 0 }$$that is, for each $i\in \mathbb{N}$, the representation $E_i$ has a module $\Q$ at the
 vertices $v_0$ and $v_i$, and $0$ otherwise. Therefore, the direct
 sum of the representations of $E_i$ for $i\geq 1$will be as follows:
 $$\xymatrix{  &   &  \Q   \\
 &   \Q^{(\mathbb{N})} \ar[r] \ar[ur]^{\vdots} \ar[dr]_{\vdots}  &  \Q \\
 &   & \Q} $$ If we show that $\oplus_{i\geq 1} E_i$ is not an injective representation of $(Q,
 \ZMOD)$, then we will see that the statement of Proposition \ref{prop:Qsatisfy(A)IFF-R-satisfy} does not hold for this $Q$ (since $R=\Z$ satisfies (\textbf{A})). Now suppose on the contrary that $\oplus_{i\geq 1}
 E_i$ is injective. Then since $Q$ is source injective
 representation quiver as it is right rooted (see \citet[Theorem 4.2]{Enochs-et.al:InjectiveRepresentationsOfQuivers}) we have (ii), that is, $$\bigoplus_{i\geq 1}E_i(v_0) \To \prod_{s(a)=v_0} \bigoplus_{i\geq 1} E_i
 (t(a))$$is a splitting epimorphism, or equivalently, $\Q^{(\mathbb{N})} \To
 \Q^{\mathbb{N}}$ is a splitting epimorphism. However, this is
 impossible since $\Q^{(\mathbb{N})}$ has a countable basis but
 $\Q^{\mathbb{N}}$ does not have it.

\end{example}

Now recall that a representation of a quiver $Q$ is said to be
\emph{finitely generated} if it is finitely generated as an object
of the category of representations of $Q$.
\begin{theorem}\label{theorem:if-any-directsum-of-injectives-is-injective-then-any-injective-rep.is-direct-sum-of-indec.injectives}
Let $Q$ be any quiver satisfying (\textbf{B}). If $R$ satisfies
(\textbf{A}), then every injective representation of $\F_{cw}$ is
the direct sum of indecomposable injective representations of
$\F_{cw}$.
\end{theorem}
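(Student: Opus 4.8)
The plan is to adapt the classical Matlis--Papp decomposition of injective objects to the present torsion--theoretic setting, using as a substitute for the Noetherian hypothesis the fact, recorded in Proposition~\ref{prop:Qsatisfy(A)IFF-R-satisfy}, that under \textbf{(A)} and \textbf{(B)} any direct sum of injective representations of $\F_{cw}$ is again injective. I will freely use three elementary facts. First, $\F_{cw}$ is closed under subrepresentations (torsion free classes always are: a morphism from a torsion object into a subrepresentation, followed by the monic inclusion, vanishes) and under injective envelopes (Theorem~\ref{thm:thereAreEnoughTorsion-freeInjectives}). Second, a direct summand of an injective representation of $\F_{cw}$ is again an injective representation of $\F_{cw}$, being a summand of an injective and a subrepresentation of an object of $\F_{cw}$. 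Third, an injective subrepresentation of any representation is automatically a direct summand, by splitting the inclusion through injectivity.

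The first and main step is to show that every nonzero injective $E\in\F_{cw}$ contains an \emph{indecomposable} injective subrepresentation lying in $\F_{cw}$, which I would realize as the injective envelope $E(U)$ of a uniform subrepresentation $U\subseteq E$ (uniform meaning any two nonzero subrepresentations of $U$ meet nontrivially). Such an envelope is indecomposable injective, lies in $\F_{cw}$ since $U\in\F_{cw}$ and $\F_{cw}$ is closed under injective envelopes, can be taken inside $E$ as a maximal essential extension, and splits off because it is injective. Everything thus reduces to producing a uniform subrepresentation of a nonzero injective $E\in\F_{cw}$, and this is where Proposition~\ref{prop:Qsatisfy(A)IFF-R-satisfy} must be exploited and where I expect the real difficulty to lie. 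The route I would follow is the classical one: first extract from the direct--sum--injective property a relative ascending chain condition by a Bass--Papp type argument, in which a strictly increasing chain is used to build a morphism into a direct sum of injective representations of $\F_{cw}$ that fails to extend, contradicting injectivity of that direct sum; and then use this chain condition to locate a uniform subrepresentation inside a nonzero finitely generated subrepresentation of $E$. Indeed, if no uniform subrepresentation existed one could repeatedly split off independent nonzero pieces $U_1,U_2,\dots$ and obtain a strictly increasing chain $U_1\subsetneq U_1\oplus U_2\subsetneq\cdots$, contradicting the chain condition. The delicate point, and the principal obstacle, is to carry out the Bass--Papp construction \emph{entirely within} $\F_{cw}$, that is, to guarantee that the injective representations occurring in it are torsion free, so that Proposition~\ref{prop:Qsatisfy(A)IFF-R-satisfy} genuinely applies.

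With the first step available, the decomposition follows by a Zorn's lemma exhaustion. I would consider the families $\{E_i\}_{i\in I}$ of independent indecomposable injective subrepresentations of $E$, each $E_i\in\F_{cw}$, partially ordered by inclusion, and fix a maximal such family. Its internal direct sum $\bigoplus_{i\in I}E_i$ is injective by Proposition~\ref{prop:Qsatisfy(A)IFF-R-satisfy}, hence a direct summand of $E$, say $E=\big(\bigoplus_{i\in I}E_i\big)\oplus C$. If $C\neq 0$ then $C$ is an injective representation of $\F_{cw}$, so by the first step it contains an indecomposable injective subrepresentation of $\F_{cw}$; adjoining it to $\{E_i\}_{i\in I}$ contradicts maximality. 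Therefore $C=0$ and $E=\bigoplus_{i\in I}E_i$ is a direct sum of indecomposable injective representations of $\F_{cw}$, as required. The Zorn step is routine once indecomposable summands are available, so the entire weight of the proof rests on the uniform--subrepresentation lemma of the first step, and in writing the argument I would keep whichever form of that lemma (the Bass--Papp contradiction or the finitely generated ascending chain argument) transfers most cleanly to $(Q,\RMOD)$.
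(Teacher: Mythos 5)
Your outer structure coincides with the paper's: a Zorn exhaustion over independent families of indecomposable injective subrepresentations in $\F_{cw}$, using Proposition~\ref{prop:Qsatisfy(A)IFF-R-satisfy} to make the sum of a maximal family injective, and reducing everything to the claim that a nonzero injective $E'\in\F_{cw}$ has a nonzero indecomposable summand. But that claim is exactly where you stop: you name two candidate strategies (a ``relative Bass--Papp'' chain condition, or an ascending chain of independent pieces) and explicitly defer the choice, so the heart of the theorem is not actually proved. Worse, the route you lean on is shaky as stated: a Bass--Papp argument run inside $\F_{cw}$ can only detect chains whose quotients stay torsion free (an ACC on $\F$-saturated subobjects), whereas the chain $U_1\subsetneq U_1\oplus U_2\subsetneq\cdots$ produced from a non-uniform subrepresentation consists of arbitrary subobjects, so no contradiction follows. (The uniform-subobject route can be rescued without any ACC: take a nonzero finitely generated $C\subseteq E'$, suppose $U_1,U_2,\dots$ are independent nonzero subobjects of $C$, note each $U_i$ and its injective envelope lie in $\F_{cw}$, extend $\bigoplus_i U_i\to\bigoplus_i E(U_i)$ to all of $E'$ using Proposition~\ref{prop:Qsatisfy(A)IFF-R-satisfy}, and observe that the image of $C$ must land in a finite subsum while meeting every component. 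But you would have to say this; it is not in your write-up.)

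For comparison, the paper avoids uniform subobjects entirely, following Stenstr\"om: fix a nonzero finitely generated $C\subseteq E'$ and apply Zorn to the family $\Sigma$ of injective subrepresentations $E''\subseteq E'$ lying in $\F_{cw}$ with $C\nsubseteq E''$. Chains in $\Sigma$ have upper bounds because a direct union of injective members of $\F_{cw}$ is again injective and in $\F_{cw}$ (Lemma~\ref{lemma:direct-union-of-torsion-freeANDinjective-rep.is-torsion-free-injective}, which is where (\textbf{A}) and (\textbf{B}) enter), and $C$ finitely generated cannot sit inside the union without sitting inside some member. A maximal $E''$ splits off, $E'=E''\oplus D$ with $D\neq 0$, and if $D=D'\oplus D''$ nontrivially then $(E''+D')\cap(E''+D'')=E''$ forces $C$ to avoid one of $E''+D'$, $E''+D''$, contradicting maximality; hence $D$ is indecomposable. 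This argument needs only the direct-union lemma already established and no chain condition, which is precisely the ingredient your sketch is missing.
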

\begin{proof}
Following the proof of \citet[Proposition
4.5]{Stenstrom:RingsOfQuotients} we argue as follows: let $E \in
\F_{cw}$ be an injective representation of $Q$. Consider all
independent families $(E_i)_{i\in I}$ of indecomposable torsion free
and injective subrepresentations of $E$. Then by Zorn's lemma, there
is a maximal such family $(E_i)_{i\in I}$. Since $\oplus_{i\in I}
E_i\in \F_{cw}$ and it is injective (by Proposition
\ref{prop:Qsatisfy(A)IFF-R-satisfy}), we can write $\displaystyle E
= (\oplus E_i)\oplus E'$. To show that $E' = 0$ it is enough to show
that every injective representation with $0 \neq E' \in \F_{cw}$
contains a non-zero indecomposable direct summand. Consider the set
of all subrepresentations of $E'$ such that:
$$\Sigma = \{E'' \subset E' \mid E''\in \F_{cw}, \text{ injective s.t.
} C \nsubseteq E'' \text{ where } 0 \neq C \subset E' \text{ is
f.g.} \}$$ (In fact, we can take such a non-zero finitely generated
representation $C$, since$(Q, \RMOD)$ is locally finitely
generated). Now take $ \overline{E} = \sum_{E'' \in \Omega} E''$
where $\Omega$ is a chain of $\Sigma$. Then by Lemma
\ref{lemma:direct-union-of-torsion-freeANDinjective-rep.is-torsion-free-injective},
$\overline{E} \in \F_{cw}$ and it is injective. Clearly $C
\nsubseteq \overline{E}$ since $C$ is finitely generated (if $C
\subset \overline{E}$ then $C \subset E''$ for some $E'' \in \Omega$
which is impossible). This shows that $\overline{E} \in \Sigma$ and
in fact it is an upper bound of $\Omega$. Then by Zorn's lemma
$\Sigma$ has a maximal element, say $E''$. Now we have $E' = E''
\oplus D$ where $0\neq D$ is an indecomposable representation. For
if $D = D' \oplus D''$ with $D'\neq 0$ and $D'' \neq 0$, then $(E''
+ D') \cap (E'' + D'') = E''$ and so either $C \nsubseteq E'' + D'$
or $C \nsubseteq E'' + D''$ which contradicts the maximality of
$E''$ in $\Sigma$. Hence, every non-zero $E'$ contains an
indecomposable direct summand, which completes the proof.

\end{proof}

\begin{proposition}
Let $Q$ be any quiver satisfying (\textbf{B}). If $R$ satisfies
(\textbf{A}), then $(Q, \RMOD)$ admits $\F_{cw}$-precovers.
\end{proposition}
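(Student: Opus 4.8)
The plan is to reduce the existence of $\F_{cw}$-precovers for an arbitrary representation to the existence of a precover of an \emph{injective} representation, and then to realise that precover explicitly as a direct sum of indecomposable torsion free injective representations indexed by all morphisms into the target.

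First I would use that $(Q,\RMOD)$ has enough injectives: any representation $C$ embeds in an injective representation $\X$, so by Lemma \ref{lemma:WeCanTakeInjectiveRep.ToBeCovered}(i) it is enough to produce an $\F_{cw}$-precover of $\X$. Since $\X$ is injective, Lemma \ref{lemma:WeCanTakeInjectiveRep.ToBeCovered}(ii) tells me that a morphism $\psi:\X'\To\X$ with $\X'\in\F_{cw}$ is an $\F_{cw}$-precover exactly when every morphism $\phi:\Y\To\X$ from a torsion free injective representation $\Y$ factors through $\psi$. Thus the whole problem collapses to guaranteeing these factorizations.

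To construct $\X'$, let $\{U_\alpha\}_{\alpha\in A}$ be a set of representatives of the isomorphism classes of indecomposable torsion free injective representations of $Q$; such a set exists because $(Q,\RMOD)$ is locally finitely generated, so each indecomposable injective is the injective envelope of one of its finitely generated subobjects, and the finitely generated objects form a set up to isomorphism. I would then put
\[
\X' \;=\; \bigoplus_{\alpha\in A}\;\bigoplus_{\phi\in\Hom_Q(U_\alpha,\X)} U_\alpha,
\]
letting $\psi:\X'\To\X$ be the morphism whose restriction to the copy of $U_\alpha$ indexed by $\phi$ is $\phi$ itself. Because every $U_\alpha\in\F_{cw}$ and $\F_{cw}$ is closed under direct sums, $\X'\in\F_{cw}$ (indeed $\X'$ is injective by Proposition \ref{prop:Qsatisfy(A)IFF-R-satisfy}). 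For the factorization: given any torsion free injective $\Y$ and $\phi:\Y\To\X$, Theorem \ref{theorem:if-any-directsum-of-injectives-is-injective-then-any-injective-rep.is-direct-sum-of-indec.injectives} gives $\Y=\bigoplus_j\Y_j$ with each $\Y_j$ indecomposable torsion free injective, so $\Y_j\cong U_{\alpha_j}$; the restriction $\phi|_{\Y_j}$ then names an element of $\Hom_Q(U_{\alpha_j},\X)$, that is, a specific summand of $\X'$, yielding $f_j:\Y_j\To\X'$ with $\psi f_j=\phi|_{\Y_j}$. Assembling the $f_j$ across the direct sum gives $f:\Y\To\X'$ with $\psi f=\phi$, which is the condition required by Lemma \ref{lemma:WeCanTakeInjectiveRep.ToBeCovered}(ii).

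The step I expect to be the main obstacle is the set-theoretic legitimacy of the indexing of $\X'$: one must be certain that the indecomposable torsion free injective representations form an honest set and not a proper class, for otherwise the displayed coproduct is not a well-defined object of $(Q,\RMOD)$. This is precisely the point where local finite generation of the category, together with the identification of indecomposable injectives as injective envelopes of finitely generated subobjects, does the essential work; once that is granted, the remainder is the routine bookkeeping of maps into and out of direct sums, powered by the decomposition supplied by Theorem \ref{theorem:if-any-directsum-of-injectives-is-injective-then-any-injective-rep.is-direct-sum-of-indec.injectives}.
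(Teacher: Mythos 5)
Your proposal is correct and follows essentially the same route as the paper: reduce to an injective target via Lemma \ref{lemma:WeCanTakeInjectiveRep.ToBeCovered}, form the direct sum of copies of the indecomposable torsion free injective representations indexed by $\Hom_Q(U_\alpha,\X)$ (the paper writes this as $\X'=\oplus_{\mu}E_{\mu}^{(H_{\mu})}$), and obtain the factorization from the decomposition supplied by Theorem \ref{theorem:if-any-directsum-of-injectives-is-injective-then-any-injective-rep.is-direct-sum-of-indec.injectives}. The only difference is that you additionally justify why the indecomposable torsion free injectives form a set, a point the paper takes for granted.
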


\begin{proof}
Since the category $(Q, \RMOD)$ has enough injectives, it suffices
to show that an \emph{injective} representation $\X$ has an
$\F_{cw}$-precover (by Lemma
\ref{lemma:WeCanTakeInjectiveRep.ToBeCovered}-(i)), and so we can
take an injective representation $\Y \in \F_{cw}$ (by Lemma
\ref{lemma:WeCanTakeInjectiveRep.ToBeCovered}-(ii)). Let $\{E_{\mu}
\mid \mu \in \Lambda \}$ denote the set of representatives of
indecomposable injective representations of $\F_{cw}$. Let $H_{\mu}
= \Hom_Q (E_{\mu}, \X)$ and then define $\displaystyle \X' =
\oplus_{\mu \in \Lambda} E_{\mu}^{(H_{\mu})}$. So there is a
morphism $\psi : \X' \To \X$ such that $\psi \mid_{E_{\mu}} \in
H_{\mu}$. Thus every morphism $\phi : \Y \To \X$ with an injective
representation $\Y \in \F_{cw}$ factors through the canonical map
$\psi: \X' \To \X$, since $\Y = \oplus_{\mu \in \Lambda'} E_{\mu}$
by Theorem
\ref{theorem:if-any-directsum-of-injectives-is-injective-then-any-injective-rep.is-direct-sum-of-indec.injectives}
where $\Lambda' \subseteq \Lambda$.
\end{proof}


To prove that $(Q, \RMOD)$ admits $\F_{cw}$-covers, we need the
following lemmas by the same methods of proofs given in, for
example, \citet[Lemmas 1.3.6-1.3.7]{Xu:FlatCoversOfModules} for
usual torsion theories for $\RMOD$.

\begin{lemma}\label{lemma:torsion-freePrecoverIs-torsion-freeCover1}
Let $Q$ be any quiver satisfying (\textbf{B}) and let $R$ satisfy
(\textbf{A}). If $\psi : \X' \To \X$ is an $\F_{cw}$-precover of the
representation $\X$, then we can derive an $\F_{cw}$-precover $\phi
: \Y \To \X$ such that there is no non-trivial subrepresentation $S
\subseteq \ker (\phi)$ with $\Y/S \in \F_{cw}$.
\end{lemma}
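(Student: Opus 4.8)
The plan is to obtain $\phi$ from $\psi$ by factoring out the largest ``redundant'' part of $\ker\psi$, namely the part that $\F_{cw}$ cannot detect. Since $\X'\in\F_{cw}$ (as the source of a precover), consider the family
$$\mathcal{S} = \{\, S \subseteq \ker\psi : \X'/S \in \F_{cw} \,\},$$
partially ordered by inclusion; it is nonempty because $0 \in \mathcal{S}$. By Zorn's Lemma I would choose a maximal element $S_0 \in \mathcal{S}$, set $\Y = \X'/S_0$ with quotient map $\pi : \X' \To \Y$, and let $\phi : \Y \To \X$ be the morphism induced by $\psi$ (well defined because $S_0 \subseteq \ker\psi$), so that $\phi \circ \pi = \psi$.

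Two verifications then remain, both formal. First, $\phi$ is again an $\F_{cw}$-precover: $\Y \in \F_{cw}$ by the defining property of $S_0$, and for any $\Y' \in \F_{cw}$ and any $g : \Y' \To \X$ the precover property of $\psi$ yields $h : \Y' \To \X'$ with $\psi h = g$, whence $\phi(\pi h) = g$. Second, $\phi$ has the required property: since $\phi$ is induced on $\X'/S_0$ one has $\ker\phi = \ker\psi/S_0$, so every subrepresentation $S \subseteq \ker\phi$ has the form $\tilde S/S_0$ with $S_0 \subseteq \tilde S \subseteq \ker\psi$, and $\Y/S \cong \X'/\tilde S$. If $\Y/S \in \F_{cw}$ then $\tilde S \in \mathcal{S}$, and maximality of $S_0$ forces $\tilde S = S_0$, i.e. $S = 0$. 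This is exactly the assertion, and it is precisely why one must take $S_0$ \emph{maximal} rather than minimal.

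The only substantial point, and the step I expect to be the main obstacle, is verifying the hypothesis of Zorn's Lemma: that every chain $(S_i)_{i \in I}$ in $\mathcal{S}$ has an upper bound in $\mathcal{S}$. The natural candidate is $S = \bigcup_i S_i \subseteq \ker\psi$, and the question is whether $\X'/S \in \F_{cw}$. By Proposition \ref{prop:ArepresentationTorsion-freeIFFeachModuleTORSION-free} this reduces, vertex by vertex, to showing that the directed union $S(v) = \bigcup_i S_i(v)$ of submodules of $\X'(v)$, each having torsion-free quotient, again has torsion-free quotient, i.e. that $\X'(v)/S(v) = \varinjlim_i \X'(v)/S_i(v)$ is torsion free. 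This is genuinely delicate: the class $\mathcal{S}$ is always closed under intersections (a submodule of a product of torsion-free modules), so it has a \emph{minimum}, but closure under directed unions fails for an arbitrary hereditary torsion theory, since $\F$ need not be closed under direct limits. It is exactly here that hypothesis (\textbf{A}) must intervene, in the same role that closure of the flat class under direct limits plays in Xu's original argument. I would therefore isolate and prove, as a preliminary lemma, that under (\textbf{A}) a directed union of submodules of a fixed module with torsion-free quotients again yields a torsion-free quotient---the module-theoretic analogue of the direct-union closure already established for injective subrepresentations in Lemma \ref{lemma:direct-union-of-torsion-freeANDinjective-rep.is-torsion-free-injective}---and then feed it into the Zorn argument above.
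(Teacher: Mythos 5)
Your construction is exactly the one the paper intends: the paper gives no proof of this lemma, deferring instead to Xu's Lemmas 1.3.6--1.3.7, and Xu's Lemma 1.3.6 is precisely your Zorn's-lemma argument on the family of subrepresentations of $\ker\psi$ with quotient in $\F_{cw}$; the formal verifications you give (that the induced $\phi$ is again a precover, and that maximality of $S_0$ forces $S=0$) are correct. Your diagnosis that the only substantial point is the chain condition is also accurate, and the preliminary lemma you defer does hold under (\textbf{A}): working vertex by vertex via Proposition \ref{prop:ArepresentationTorsion-freeIFFeachModuleTORSION-free}, if $S=\bigcup_i S_i$ and $x+S(v)$ is torsion in $\X'(v)/S(v)$, then the left ideal $(S(v):x)=\bigcup_i (S_i(v):x)$ is a directed union lying in the Gabriel filter of the torsion theory, and condition (\textbf{A}) is equivalent (a classical characterization going back to Teply) to this filter being noetherian, i.e.\ an ascending chain of left ideals whose union lies in the filter already has a member in the filter; hence some $(S_i(v):x)$ is in the filter, so $x+S_i(v)$ is torsion in the torsion free module $\X'(v)/S_i(v)$ and therefore $x\in S_i(v)\subseteq S(v)$. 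Two minor remarks: hypothesis (\textbf{B}) plays no role in this particular lemma, and your aside that $\mathcal{S}$ has a minimum is vacuous (the minimum is $0$), though it does not affect the argument.
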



\begin{lemma}\label{lemma:torsion-freePrecoverIs-torsion-freeCover2}
Let $Q$ be any quiver satisfying (\textbf{B}) and let $R$ satisfy
(\textbf{A}). If $\phi : \Y \To \X$ is an $\F_{cw}$-precover of $\X$
with no non-trivial subrepresentation $S \subseteq \Y$ such that $S
\subseteq \ker(\phi)$ and $\Y/S \in \F_{cw}$, then this
$\F_{cw}$-precover is actually an $\F_{cw}$-cover of $\X$.
\end{lemma}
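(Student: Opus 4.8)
The plan is to verify directly that the precover $\phi$ satisfies the defining closure property of a cover: every endomorphism $f\colon \Y \To \Y$ with $\phi\circ f=\phi$ is an automorphism. Since $(Q,\RMOD)$ is abelian, it suffices to prove that such an $f$ is simultaneously a monomorphism and an epimorphism, and I would handle these two implications separately, the first being essentially formal and the second carrying the real content of the lemma.

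For the monomorphism, first note that $\phi\circ f=\phi$ forces $\ker f\subseteq\ker\phi$, since any $y\in\ker f$ has $\phi(y)=\phi(f(y))=0$. Next, $\Y/\ker f\cong f(\Y)$, and $f(\Y)$ is a subrepresentation of $\Y\in\F_{cw}$; as $(\T_{cw},\F_{cw})$ is hereditary, its torsion free class $\F_{cw}$ is closed under subrepresentations, so $f(\Y)\in\F_{cw}$ and hence $\Y/\ker f\in\F_{cw}$. Thus $\ker f$ is a subrepresentation of $\ker\phi$ whose quotient lies in $\F_{cw}$, and the hypothesis on $\phi$ (that the only such subrepresentation is the zero one) gives $\ker f=0$. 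Hence $f$ is monic.

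For the epimorphism I would first extract the structural consequences of $\phi\circ f=\phi$ together with the fact that $f$ is now monic. Evaluating $\phi$ on the image gives $\phi(f(\Y))=\phi(\Y)$, whence $\Y=f(\Y)+\ker\phi$; moreover a short computation shows $f(\Y)\cap\ker\phi=f(\ker\phi)$, so that $\Y/f(\Y)\cong\ker\phi/f(\ker\phi)$. The goal is then to prove that this cokernel vanishes. The strategy is to realise $f(\Y)$ as a direct summand of $\Y$ admitting a complement inside $\ker\phi$: if one can write $\Y=f(\Y)\oplus C$ with $C\subseteq\ker\phi$, then $\Y/C\cong f(\Y)\cong\Y\in\F_{cw}$ while $C\subseteq\ker\phi$, so the minimality hypothesis forces $C=0$ and hence $f(\Y)=\Y$. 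To produce such a splitting I would use that $\phi\mid_{f(\Y)}\colon f(\Y)\To\X$ is again an $\F_{cw}$-precover of $\X$ (given $\alpha\colon G\To\X$ with $G\in\F_{cw}$, factor $\alpha=\phi\beta$ and note $\alpha=\phi\mid_{f(\Y)}\circ(f\beta)$), and then compare the two $\F_{cw}$-precovers $\phi$ and $\phi\mid_{f(\Y)}$ of $\X$ to construct a retraction of the inclusion $f(\Y)\hookrightarrow\Y$ whose complement can be arranged to lie in $\ker\phi$.

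The main obstacle is precisely this last construction. The monomorphism part rests only on the closure of $\F_{cw}$ under subrepresentations and on the minimality hypothesis, and is immediate; but converting the identity $\Y=f(\Y)+\ker\phi$ into an honest direct sum decomposition with complement inside $\ker\phi$ is where the precover property must be exploited in full, exactly along the lines of \citet[Lemma 1.3.7]{Xu:FlatCoversOfModules}. I expect the delicate point to be ensuring that the complement can be chosen inside $\ker\phi$ (equivalently, that the monomorphism $f$ restricted to $\ker\phi$ splits), after which the minimality hypothesis closes the argument cleanly and yields that $f$ is an automorphism, so that $\phi$ is an $\F_{cw}$-cover.
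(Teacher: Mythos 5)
Your first half is correct and complete: from $\phi\circ f=\phi$ you get $\ker f\subseteq\ker\phi$, and since $\Y/\ker f\cong f(\Y)$ is a subrepresentation of $\Y\in\F_{cw}$ and torsion-free classes are closed under subobjects, the minimality hypothesis forces $\ker f=0$. This is exactly the monomorphism step of the argument the paper points to (Xu, Lemmas 1.3.6--1.3.7), and the identities you record afterwards ($\Y=f(\Y)+\ker\phi$ and $f(\Y)\cap\ker\phi=f(\ker\phi)$, hence $\Y/f(\Y)\cong\ker\phi/f(\ker\phi)$) are also right.

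The epimorphism half, however, is a genuine gap, and it is precisely the half that carries the content of the lemma. Your plan is to produce a decomposition $\Y=f(\Y)\oplus C$ with $C\subseteq\ker\phi$ and then kill $C$ by minimality; the conclusion would indeed follow if such a splitting existed, but the proposed mechanism for building it does not work. Comparing the two precovers $\phi$ and $\phi\mid_{f(\Y)}$ of $\X$ only yields morphisms $v\colon\Y\To f(\Y)$ with $\phi\mid_{f(\Y)}\circ v=\phi$ and the inclusion $u\colon f(\Y)\To\Y$ in the other direction; the composites $uv$ and $vu$ are again comparison endomorphisms over $\X$, and the only thing your first half gives about such endomorphisms is that they are \emph{monic} --- which is exactly the statement you are trying to upgrade. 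So no retraction of $u$ is produced, and there is no reason for the extension $0\to f(\ker\phi)\to\ker\phi\to\ker\phi/f(\ker\phi)\to 0$ to split in general. Nor can the quotient $\Y/f(\Y)\cong\ker\phi/f(\ker\phi)$ be fed directly to the minimality hypothesis, since nothing shows it lies in $\F_{cw}$. The surjectivity of $f$ genuinely uses more than ``precover plus minimality applied formally'': already for the classical torsion free cover $\mathbb{Z}_p\to\mathbb{Z}/p\mathbb{Z}$ over $\Z$, an endomorphism with $\phi f=\phi$ is multiplication by a unit $a\equiv 1 \pmod p$, and its invertibility rests on a completeness property of the kernel that the precover condition encodes; this is the part of Xu's Lemma 1.3.7 that must be reproduced (or adapted to $(Q,\RMOD)$), and it is exactly the part you acknowledge you have not supplied. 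As it stands the proposal proves that $f$ is a monomorphism and stops there.
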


\begin{theorem}\label{thm:torsion-free-coversEXISTSfor-quivers}
Let $Q$ be any quiver satisfying (\textbf{B}) and let $R$ satisfy
(\textbf{A}). Then every representation in $(Q, \RMOD)$ has a
unique, up to isomorphism, $\F_{cw}$-cover.
\end{theorem}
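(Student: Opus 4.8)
The plan is to obtain existence by chaining together the three preceding results, and then to deduce uniqueness from the abstract defining property of a cover. For a fixed representation $\X \in (Q,\RMOD)$, I would first invoke the proposition guaranteeing that $(Q,\RMOD)$ admits $\F_{cw}$-precovers to produce some $\F_{cw}$-precover $\psi : \X' \To \X$. Next I would feed $\psi$ into Lemma \ref{lemma:torsion-freePrecoverIs-torsion-freeCover1} to replace it by an $\F_{cw}$-precover $\phi : \Y \To \X$ enjoying the minimality property that no non-trivial subrepresentation $S \subseteq \ker(\phi)$ satisfies $\Y/S \in \F_{cw}$. Finally, Lemma \ref{lemma:torsion-freePrecoverIs-torsion-freeCover2} promotes this minimal precover to a genuine $\F_{cw}$-cover, which settles existence.

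For uniqueness I would run the standard cover-comparison argument, which is purely formal and uses no property of $\F_{cw}$ beyond the definitions. Suppose $\phi : \Y \To \X$ and $\phi' : \Y' \To \X$ are two $\F_{cw}$-covers of $\X$. Since $\Y' \in \F_{cw}$ and $\phi$ is a precover, there is $g : \Y' \To \Y$ with $\phi g = \phi'$; symmetrically, there is $f : \Y \To \Y'$ with $\phi' f = \phi$. Then $\phi(gf) = \phi' f = \phi$, so the cover condition on $\phi$ forces $gf$ to be an automorphism of $\Y$; interchanging the roles of $\phi$ and $\phi'$ shows that $fg$ is an automorphism of $\Y'$. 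A morphism admitting both a left and a right inverse is an isomorphism, whence $f$ is an isomorphism and $\Y \cong \Y'$.

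The main point to stress is that essentially all of the substantive work has already been quarantined into the two lemmas (modelled on Xu's Lemmas~1.3.6--1.3.7), whose proofs rest on the hypotheses (\textbf{A}) and (\textbf{B}) through the direct-sum and direct-union injectivity statements of Proposition \ref{prop:Qsatisfy(A)IFF-R-satisfy} and Lemma \ref{lemma:direct-union-of-torsion-freeANDinjective-rep.is-torsion-free-injective}. Consequently the theorem itself presents no genuine obstacle: it is an assembly of existence followed by the formal uniqueness argument, and the hypotheses (\textbf{A}) and (\textbf{B}) enter only indirectly, through the cited results. The one place I would be careful is to check that the minimality condition produced by Lemma \ref{lemma:torsion-freePrecoverIs-torsion-freeCover1} is exactly the hypothesis consumed by Lemma \ref{lemma:torsion-freePrecoverIs-torsion-freeCover2}, so that the chain closes without a gap.
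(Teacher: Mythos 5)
Your proposal is correct and follows exactly the paper's route: existence is obtained by feeding an $\F_{cw}$-precover (from the preceding proposition) through Lemmas \ref{lemma:torsion-freePrecoverIs-torsion-freeCover1} and \ref{lemma:torsion-freePrecoverIs-torsion-freeCover2}, and uniqueness is the standard formal cover-comparison argument, which is precisely the content of the result of Xu that the paper cites. The only difference is that you write out the uniqueness argument explicitly where the paper defers to the reference.
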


\begin{proof}
The existence part of the proof  follows by Lemmas
\ref{lemma:torsion-freePrecoverIs-torsion-freeCover1} and
\ref{lemma:torsion-freePrecoverIs-torsion-freeCover2}, and the
uniqueness part follows by \citet[Theorem
1.2.6]{Xu:FlatCoversOfModules}.
\end{proof}

\begin{example}\label{exm:torsion-freeCover}
Let $R$ satisfy (\textbf{A}). Consider the quiver $Q \equiv \bullet
\to \bullet$. For any module $M$, if we take the torsion free cover
$\psi: T \to M$ of $M$ (this is possible in the category of $\RMOD$,
see \citet{Teply:TorsionfreeInjectiveModules}), then $$\xymatrix{\overline{T} \ar[d]_{\overline{\psi}} \\
\overline{M}}\qquad \xymatrix{ \Ker \psi \ar[r]^{i} \ar[d] & T
\ar[d]^{\psi}
 \\ 0 \ar[r] & M }$$ is an $\F_{cw}$-cover of the representation $0 \to
 M$. In fact, if there is a morphism $$\xymatrix{T_1 \ar[r]^{\alpha}
\ar[d] & T_2 \ar[d]^{\beta}
 \\ 0 \ar[r] & M }$$ where $T_1 \to T_2 \in
 \F_{cw}$, then there exists $f:T_2 \to
 T$ such that $\psi f = \beta$ since $\psi$ is torsion free precover, and so taking $g=f\alpha : T_1 \to \Ker
 \psi$ (it is well-defined since for any $x\in T_1, \psi f \alpha (x) = \beta \alpha (x)=
 0$) we see that it is an $\F_{cw}$-precover. And if there is an
 endomorphism $\overline{f}=(f, g): \overline{T} \to \overline{T}$ such
 that $\overline{\psi}\circ \overline{f} = \overline{\psi}$ then $f$
 is automorphism (since $\psi$ is a torsion free cover), and so $g$
 is a monomorphism. To show that $g$ is epic, take any $y\in \Ker
 \psi$. Then $y= f(x)$ for some $x\in T$ (since $f$ is epic). Since $\psi (x)= \psi f (x) =
 0$, $x \in \Ker \psi$ and thus $y=f(x) = g(x)$ implies that $g$ is
 epic. Hence $\overline{f}$ is an automorphism, that is,
 $\overline{\psi}$ is an $\F_{cw}$-cover.

\end{example}

\begin{remark}
In \citet{Dunkum:AgeneralizationOfBaersLemma}, the question was
raised whether the category ($A^{\infty}, \RMOD$) admits torsion
free covers, where $A^{\infty} \equiv \bullet \to \bullet \to
\cdots$. By Theorem\ref{thm:torsion-free-coversEXISTSfor-quivers},
 if $R$ satisfies (\textbf{A}) then the
 category $(A_{\infty}, \RMOD)$ admits torsion free covers since $A^{\infty}$ satisfies (\textbf{B}).
\end{remark}

\section{Componentwise flat covers in the category of
representations}\label{sec:componentwiseFlatCovers}

Let $\mathscr{A}$ be a Grothendieck category. Recall that an object
$C$ of $\mathscr{A}$ is \emph{finitely presented} if it is finitely
generated and every epimorphism $B\to C$, where $B$ is finitely
generated, has a finitely generated kernel. $\mathscr{A}$ is said to
be a \emph{locally finitely presented} category if it has a family
of finitely presented generators.

In \citet{Rump:FlatCoversInAbelianAndInNon-abelianCategories}, flat
covers are shown to exist in locally finitely presented Grothendieck
categories. Then the category $(Q, \RMOD)$ admits flat covers for
any quiver $Q$ since it is a locally finitely presented Grothendieck
category. This is because $(Q, \RMOD)$ has a family of finitely
generated projective (and so finitely presented) generators. Here by
``flat" we mean \emph{categorical flat} representations of $Q$, that
is, $\displaystyle \lim_{\rightarrow}P_i$ where each $P_i$ is a
projective representation of $Q$.

Now we will define flat representations \emph{componentwise} which
are different from \emph{categorical} flat representations.

\begin{definition} Let $Q$ be any quiver and let $M$ be a representation of $Q$. We call $M$ \emph{componentwise flat} if $M (v)$ is a flat $R$-module for
all $v\in V$.
\end{definition}
This definition is not the categorical definition of flat
representations, but it is the correct one when we consider $(Q,
\RMOD)$ as the category of presheaves over a topological space. By
$\mathscr{F}_{cw}$ we denote the class of all componentwise flat
representations.

Also, let us define \emph{pure subrepresentations} componentwise.
\begin{definition}
An exact sequence of left $R$-modules $$\xymatrix{0 \ar[r] &  A
\ar[r] & B \ar[r]  &  C  \ar[r] &   0}$$ is  \emph{pure} if for
every right $R$-module $L$, the induced sequence $$\xymatrix{0
\ar[r] & L \otimes_{R} A \ar[r] &  L \otimes_{R} B \ar[r]  &  L
\otimes_{R} C \ar[r] &   0 }$$ is exact. A submodule $A \subseteq B$
is \emph{pure} if the induced sequence is pure.
\end{definition}
\begin{definition}
Let $M$ be a representation of $Q$. We call a subrepresentation $P
\subseteq M$  \emph{componentwise pure} if $P(v) \subseteq M(v)$ is
pure submodule for all $v\in V$.
\end{definition}

In the proof of the following lemma, we can consider the
representation generated by  an element ``$x$". Let $M$ be a
representation of $Q$ and let $x\in M$  (so $x\in M(v)$ for some
$v\in V$).  Since $S_v$ is a left adjoint of $T_v$, we have
$$\Hom_{(\{v\}, \RMOD)} (R, M(v)) \cong \Hom_{(Q, \RMOD)} (S_v(R),
M)$$ for all $v\in V$. So we have a unique morphism $\varphi :
S_v(R) \To M$ corresponds to the $R$-homomorphism $\varphi_x:R \to
M(v)$ given by $\varphi_x(1) = x$. Thus $\Image (\varphi)$ is the
subrepresentation of $M$ generated by $x$.

The cardinality of a representation $M$ of a quiver $Q$ is defined
as\\
$\displaystyle |M|= \bigg|\coprod_{v\in V} M(v)\bigg|$.
\begin{lemma}\label{lemma:ExistingPureSubrepresentationForInfiniteCardinal}
Let $\aleph$ be an infinite cardinal such that $\aleph \geq \sup
\{|R|, |V|, |E| \}$. Let $M$ be a representation of $Q$. Then for
each $x\in M$, there exists a componentwise pure subrepresentation
$P$ of $M$ such that $\mid P \mid \leq \aleph$ and $x \in P$.
\end{lemma}
\begin{proof}
Let $x \in M(v)$ with $v\in V$. Then consider the subrepresentation
$M^0 \subseteq M$ generated by $x$. Then $|M^0| \leq \aleph$ since
$$|S_v (R)(w)| = |\oplus_{Q(v, w)}R| \leq |V|\cdot|E|\cdot|\mathbb{N}|\cdot|R|
\leq \aleph \cdot \aleph_0 = \aleph .$$  Since $|M^0 (v)| \leq
\aleph$ for all $v\in V$, we can apply \citet[Lemma
2.5.2]{Xu:FlatCoversOfModules}, so there exist pure submodules $M^1
(v)$ of $M(v)$ such that $|M^1 (v)| \leq \aleph$ and $M^0 (v)
\subseteq M^1 (v)$, $\forall v\in V$. Now consider the
subrepresentation $M^2$ of $M$ generated by $M^1(v)$ such that
$M^1(v) \subseteq M^2 (v)$ for all $v\in V$. Then $ |M^2| =
|\coprod_{v\in V} M^2(v)| = |V|\cdot|M^2(v)|\leq \aleph$ since
$|M^2(v)|\leq \aleph$  as $|M^1(v)|\leq \aleph$ for all $v\in V$. So
applying \citet[Lemma 2.5.2]{Xu:FlatCoversOfModules} again, there
exist pure submodules $M^3(v)$ of $M(v)$ such that $|M^3(v)|\leq
\aleph$ and $M^2(v)\subseteq M^3(v)$ for all $v\in V$. Now consider
the subrepresentation $M^4$ of $M$ generated by $M^3(v)$ such that
$M^3(v)\subseteq M^4(v)$. Then $|M^4| \leq \aleph$. So proceed by
induction to find a chain of subrepresentations of $M$: $M^0
\subseteq M^1 \subseteq M^2 \subseteq \cdots $ such that $|M^n|\leq
\aleph$ for every $n\in \mathbb{N}$ and $v\in V$. Therefore, by
taking $P = \bigcup_{n< \omega} M^n$ we obtain a pure
subrepresentation $P$ of $M$ which satisfies the hypothesis of the
lemma. Indeed, $P$ is componentwise pure subrepresentation of $M$,
because for each $v\in V$, the set $\{n\in \mathbb{N} : M^n (v)
\text{ is pure in } M(v)\}$ is cofinal, and the set $\{n\in
\mathbb{N} : M^n \text{ is a subrepresentation of } M\}$ is also
cofinal. Finally, it is clear that $|P| \leq \aleph_0 \cdot \aleph =
\aleph$, and $x\in P$ since $x\in M^0 (v)$.
\end{proof}

We recall that a chain of subobjects of a given object $C$ of an
abelian category, $\{C_{\alpha} : \alpha < \lambda\}$ where
$\lambda$ is an ordinal number, is said to be \emph{continuous}
provided that $C_{\omega} = \cup_{\alpha < \omega} C_{\alpha}$ for
any limit ordinal $\omega< \lambda$.

Given a class $\mathscr{F}$ of objects in an abelian category
$\mathscr{A}$, by $\mathscr{F}^{\perp}$ we denote the class of
objects $C$ of $\mathscr{A}$ such that $\Ext^1(F, C) = 0 $ for all
$F \in \mathscr{F}$, and  we say that $(\mathscr{F},
\mathscr{F}^{\perp})$ is cogenerated by a set if there exists a set
$T \subseteq \mathscr{F}$ such that $T^{\perp}= \mathscr{F}^{\perp}$
(see, for example,
\citet[Chp.7]{Enochs-Jenda:RelativeHomologicalAlgebra}).

\begin{theorem}\label{thm:(F,F^p)cogenerated-by-set}
The pair $(\mathscr{F}_{cw}, \mathscr{F}_{cw}^{\perp})$ is
cogenerated by a set.
\end{theorem}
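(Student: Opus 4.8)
The plan is to show that the pair $(\mathscr{F}_{cw}, \mathscr{F}_{cw}^{\perp})$ is cogenerated by a set, i.e. to exhibit a \emph{set} $T \subseteq \mathscr{F}_{cw}$ (rather than a proper class) with $T^{\perp} = \mathscr{F}_{cw}^{\perp}$. The natural candidate is a representative set of all componentwise flat representations whose cardinality is bounded by a fixed infinite cardinal $\aleph \geq \sup\{|R|, |V|, |E|\}$; write $T$ for this set. Since $T \subseteq \mathscr{F}_{cw}$ we immediately get $\mathscr{F}_{cw}^{\perp} \subseteq T^{\perp}$, so the whole content is the reverse inclusion $T^{\perp} \subseteq \mathscr{F}_{cw}^{\perp}$: given $C$ with $\Ext^1(P, C) = 0$ for every small componentwise flat $P$, I must show $\Ext^1(F, C) = 0$ for \emph{every} $F \in \mathscr{F}_{cw}$.

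The standard machine for this is a continuous transfinite filtration argument (the Eklof-type lemma). First I would use Lemma~\ref{lemma:ExistingPureSubrepresentationForInfiniteCardinal} as the engine: it produces, for any element $x$ of any representation $M$, a componentwise pure subrepresentation $P \ni x$ with $|P| \leq \aleph$. The key point is that a componentwise pure subrepresentation of a componentwise flat representation is itself componentwise flat, and the quotient is componentwise flat as well (at each vertex $v$ a pure submodule of a flat module is flat with flat quotient). Using this, I would build, for an arbitrary $F \in \mathscr{F}_{cw}$, a continuous chain $\{F_{\alpha} : \alpha < \lambda\}$ of subrepresentations, starting from $F_0 = 0$, exhausting $F$, with $F_0 = 0$, $F_{\alpha+1}/F_{\alpha}$ componentwise flat of cardinality at most $\aleph$ (hence isomorphic to a member of $T$), each $F_{\alpha} \in \mathscr{F}_{cw}$, and $F = \bigcup_{\alpha < \lambda} F_{\alpha}$. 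Concretely, at each successor step I pick an element not yet covered and enlarge by the pure subrepresentation Lemma~\ref{lemma:ExistingPureSubrepresentationForInfiniteCardinal} supplies, making sure the enlargement remains componentwise pure in $F$ so the successive quotient lands in $T$ up to isomorphism.

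Once such a filtration is in hand, the conclusion follows from the transfinite Eklof argument: if $C \in T^{\perp}$, then since every successive quotient $F_{\alpha+1}/F_{\alpha}$ is (isomorphic to) a member of $T$ and the filtration is continuous, one shows by transfinite induction that $\Ext^1(F_{\alpha}, C) = 0$ for all $\alpha$, the successor step using the long exact sequence attached to $0 \to F_{\alpha} \to F_{\alpha+1} \to F_{\alpha+1}/F_{\alpha} \to 0$ and the limit step using continuity together with the vanishing of the relevant $\varprojlim^1$ term. Taking $\alpha = \lambda$ gives $\Ext^1(F, C) = 0$, hence $C \in \mathscr{F}_{cw}^{\perp}$, which is exactly what is needed. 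This standard filtration/Eklof package is recorded for instance in \citet[Chp.7]{Enochs-Jenda:RelativeHomologicalAlgebra}, so I would invoke it rather than reprove it.

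The main obstacle, and the step deserving the most care, is the construction of the filtration with all the required properties simultaneously: I need each $F_{\alpha}$ to be componentwise \emph{pure} in $F$ (so each remains componentwise flat and each quotient is componentwise flat), each successive quotient to have cardinality bounded by $\aleph$ (so it is isomorphic to an object of $T$), continuity at limit ordinals, and genuine exhaustion of $F$. The delicate part is keeping purity stable along the transfinite process while controlling cardinalities — this is precisely where Lemma~\ref{lemma:ExistingPureSubrepresentationForInfiniteCardinal} is used repeatedly, and where one must check that a union of a continuous chain of componentwise pure subrepresentations is again componentwise pure, and that purity of $F_{\alpha}$ in $F$ together with purity of $F_{\alpha+1}$ in $F$ yields purity of $F_{\alpha}$ in $F_{\alpha+1}$ so that the quotient is flat. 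Everything else is the formal Eklof bookkeeping.
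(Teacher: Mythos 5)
Your proposal is correct and follows essentially the same route as the paper: the same set $T$ of small componentwise flat representations, the same use of Lemma~\ref{lemma:ExistingPureSubrepresentationForInfiniteCardinal} to build a continuous filtration with componentwise flat successive quotients of cardinality at most $\aleph$, and the same appeal to the Eklof-type lemma to conclude. The only cosmetic difference is that the paper constructs each successor step by applying the lemma to the quotient $F/F_{\alpha}$ (which is already componentwise flat) rather than tracking purity of each $F_{\alpha}$ inside $F$, which sidesteps the bookkeeping you flag as delicate.
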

\begin{proof}
Let $F \in \mathscr{F}_{cw}$ and take any element $x_0 \in F$. Then
by Lemma
\ref{lemma:ExistingPureSubrepresentationForInfiniteCardinal}, there
exists a componentwise pure subrepresentation $F_0 \subseteq F$ such
that $x_0\in F_0$ and $|F_0|\leq \aleph$ for a suitable cardinal
number. Since a pure submodule of a flat module is flat, then $F_0
\in \mathscr{F}_{cw}$, and so $F/F_0 \in \mathscr{F}_{cw}$. Then
take any element $x_1 \in F/F_0$ and find a componentwise pure (and
so componentwise flat) subrepresentation $F_1/F_0 \subseteq F/F_0$
such that
 $x_1 \in F_1/F_0$ and $|F_1/F_0| \leq \aleph$. Since $F_0, F_1/F_0 \in
 \mathscr{F}_{cw}$, we have $F_1\in \mathscr{F}_{cw}$ and so $F/F_1 \in \mathscr{F}_{cw}$. Now take $x_2 \in F/F_1$ and, since $\mathscr{F}_{cw}$ is closed under direct limits, proceed by
 transfinite induction to find, when $\alpha$ is a successor
 ordinal, subrepresentations $F_{\alpha} \subseteq F$ such that $F_{\alpha}/F_{\alpha-1}\in
 \mathscr{F}_{cw}$ (and so $F_{\alpha} \in \mathscr{F}_{cw}$) and that $|F_{\alpha}/F_{\alpha-1}| \leq
 \aleph$. When $\omega$ is a limit ordinal, define $F_{\omega} = \bigcup_{\alpha < \omega}
 F_{\alpha}$. So  $F_{\omega}\in \mathscr{F}_{cw}$  and $|F_{\omega}| \leq \aleph$ for every
 $\omega$. Now there exists an ordinal $\lambda$ such that $F$ is a
 direct union of the continuous chain $\{F_{\alpha} \mid \alpha <
 \lambda\}$ where by construction $F_0, F_{\alpha+1}/F_{\alpha} \in
 \mathscr{F}_{cw}$ and $|F_0|\leq \aleph, |F_{\alpha+1}/F_{\alpha}| \leq \aleph$.
 Thus if we choose a set $T$ of representatives of all componentwise flat
 representations with cardinality less than or equal to $\aleph$,
 then  by \citet[Lemma 1]{Eklof-Trlifaj:HowToMakeExtVanish}, we see that the pair $(\mathscr{F}_{cw}, \mathscr{F}_{cw}^{\perp})$ is cogenerated by
 $T$ (note that \citet[Lemma 1]{Eklof-Trlifaj:HowToMakeExtVanish} is for
module categories, but the same arguments of the proof carry over
general Grothendieck categories).

\end{proof}

\begin{theorem}\label{thm:AnyRep.hasFC-cover}
For any quiver $Q$, any representation of $Q$ has an
$\mathscr{F}_{cw}$-cover and an $\mathscr{F}_{cw}^{\perp}$-envelope.
\end{theorem}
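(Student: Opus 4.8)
The plan is to invoke the standard theory of cotorsion pairs generated by a set, which is exactly the machinery set up by the preceding Theorem~\ref{thm:(F,F^p)cogenerated-by-set}. The key input is that $(\mathscr{F}_{cw}, \mathscr{F}_{cw}^{\perp})$ is cogenerated by a set $T$, together with two structural facts about the class $\mathscr{F}_{cw}$: it is closed under direct limits (in particular under direct sums and well-ordered continuous unions), and it contains a generating set of the category, namely the projective generators $S_v(R)$, which are automatically componentwise flat since projective modules are flat. First I would observe that $\mathscr{F}_{cw}$ is closed under extensions: if $0 \to X' \to X \to X'' \to 0$ is exact in $(Q,\RMOD)$ with $X', X'' \in \mathscr{F}_{cw}$, then for each vertex $v$ the sequence $0 \to X'(v) \to X(v) \to X''(v) \to 0$ is exact in $\RMOD$ with outer terms flat, hence $X(v)$ is flat, so $X \in \mathscr{F}_{cw}$.

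Next I would apply the Eklof--Trlifaj theorem (the result of \citet{Eklof-Trlifaj:HowToMakeExtVanish} used already in the proof of Theorem~\ref{thm:(F,F^p)cogenerated-by-set}, valid in any Grothendieck category) to conclude that $(\mathscr{F}_{cw}, \mathscr{F}_{cw}^{\perp})$ is a \emph{complete} cotorsion pair: every object $M$ fits into short exact sequences
$$\xymatrix{0 \ar[r] & M \ar[r] & C \ar[r] & F \ar[r] & 0} \quad\text{and}\quad \xymatrix{0 \ar[r] & C' \ar[r] & F' \ar[r] & M \ar[r] & 0}$$
with $F, F' \in \mathscr{F}_{cw}$ and $C, C' \in \mathscr{F}_{cw}^{\perp}$. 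The first sequence is a special $\mathscr{F}_{cw}^{\perp}$-preenvelope and the second a special $\mathscr{F}_{cw}$-precover. This step is where the ``cogenerated by a set'' hypothesis does all the work, since it is precisely the condition needed to run the transfinite filtration argument that produces these sequences.

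Finally I would upgrade precovers to covers and preenvelopes to envelopes. For the cover, I would invoke the general theorem (see \citet[Chp.~7]{Enochs-Jenda:RelativeHomologicalAlgebra}) that a class closed under direct limits which admits precovers in fact admits covers; since $\mathscr{F}_{cw}$ is closed under direct limits, the existence of the special $\mathscr{F}_{cw}$-precover above yields an $\mathscr{F}_{cw}$-cover. For the envelope, the complementary class $\mathscr{F}_{cw}^{\perp}$ in a complete cotorsion pair is closed under the relevant operations (it is closed under direct products and under well-ordered inverse limits along the filtration), and the existence of special preenvelopes together with these closure properties yields $\mathscr{F}_{cw}^{\perp}$-envelopes by the dual of the same Enochs--Jenda machinery.

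The step I expect to be the main obstacle is the passage from precovers to covers: proving that $\mathscr{F}_{cw}$ is closed under arbitrary direct limits requires checking that a direct limit of componentwise flat representations is again componentwise flat, which reduces vertex-by-vertex to the fact that a direct limit of flat $R$-modules is flat, and then one must confirm that the hypotheses of the Enochs--Jenda cover-existence theorem genuinely apply in the Grothendieck category $(Q,\RMOD)$ rather than merely in $\RMOD$. The envelope half is then formally dual once the completeness of the cotorsion pair is in hand.
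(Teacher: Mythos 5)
Your proposal is correct and follows essentially the same route as the paper: the paper verifies the same three ingredients (closure of $\mathscr{F}_{cw}$ under extensions, direct sums and well-ordered direct limits; membership of the projective generators $S_v(R)$ in $\mathscr{F}_{cw}$; and the cogeneration by a set from Theorem~\ref{thm:(F,F^p)cogenerated-by-set}) and then cites a single packaged theorem from \citet{Enochs-et.al:FlatCoversOfQuasiCoherentSheavesOverProjectiveLine} whose proof is exactly the Eklof--Trlifaj completeness argument plus the precover-to-cover upgrade that you spell out. The only difference is that you unpack the black box while the paper does not.
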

\begin{proof}
It is clear that $\mathscr{F}_{cw}$ is closed under direct sums,
extensions and well ordered direct limits (as so is the class of all
flat modules). Moreover,  $S_v (R)(w) = \oplus_{Q(v, w)}R$ is a
projective (and so flat) module for all $w \in V$. Therefore $S_v(R)
\in \mathscr{F}_{cw}$. Now, apply Theorem 2.6 in
\citet{Enochs-et.al:FlatCoversOfQuasiCoherentSheavesOverProjectiveLine}
with Theorem \ref{thm:(F,F^p)cogenerated-by-set} to obtain the
result.
\end{proof}

Recall that a commutative integral domain is called \emph{Pr\"{u}fer
domain} if every finitely generated ideal is projective. Over such a
domain a module is flat if and only if it is torsion free (see
\citet{Rotman:HomologicalAlgebra} for the details). Combining this
fact with the previous result, we have that

\begin{theorem}\label{thm:OverPruferDomain-FC-cover=F_cw-cover}
Let $R$ be a Pr\"{u}fer domain. Then every representation in $(Q,
\RMOD)$ has an $\mathscr{F}_{cw}$-cover agreeing with its
$\F_{cw}$-cover.
\end{theorem}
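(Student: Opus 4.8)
The plan is to reduce the statement to the observation that, over a Pr\"{u}fer domain, the two classes $\mathscr{F}_{cw}$ and $\F_{cw}$ coincide as subclasses of $(Q,\RMOD)$, after which the agreement of the covers is automatic. First I would fix the relevant torsion theory on $\RMOD$: for an integral domain the faithful hereditary torsion theory $(\T,\F)$ under consideration is the classical one, whose torsion free class $\F$ consists precisely of the torsion free modules. The torsion submodule of any module is a submodule and $R$ itself is torsion free, so $(\T,\F)$ is indeed hereditary and faithful, as required throughout Section 3.

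Next I would identify the two classes vertexwise. By Proposition \ref{prop:ArepresentationTorsion-freeIFFeachModuleTORSION-free}, a representation $\X$ lies in $\F_{cw}$ if and only if $\X(v)\in\F$ for every $v\in V$, that is, if and only if each $\X(v)$ is a torsion free $R$-module. By definition $\X\in\mathscr{F}_{cw}$ if and only if each $\X(v)$ is a flat $R$-module. Invoking the fact recalled just before the theorem, that over a Pr\"{u}fer domain a module is flat exactly when it is torsion free, the two vertexwise conditions agree, and hence $\mathscr{F}_{cw}=\F_{cw}$ as classes of objects.

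Finally, since the notions of precover and cover depend only on the underlying class, an $\mathscr{F}_{cw}$-cover of a representation is at the same time an $\F_{cw}$-cover, and conversely. By Theorem \ref{thm:AnyRep.hasFC-cover} every representation in $(Q,\RMOD)$ admits an $\mathscr{F}_{cw}$-cover for an arbitrary quiver $Q$; that same morphism then serves as its $\F_{cw}$-cover, and the two agree.

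I expect the only delicate point to be the bookkeeping of which torsion theory is intended and the confirmation that the equality of classes is genuine rather than merely an equality up to some closure operation, since the entire argument rests on this single identification. A pleasant byproduct I would emphasize is that no appeal to conditions (\textbf{A}) or (\textbf{B}), nor to source injectivity of $Q$, is required here; consequently the result simultaneously establishes the existence of torsion free covers over a Pr\"{u}fer domain for \emph{every} quiver, strengthening the existence statement of Theorem \ref{thm:torsion-free-coversEXISTSfor-quivers} in this setting.
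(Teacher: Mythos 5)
Your proposal is correct and follows essentially the same route as the paper: the paper's entire argument is the observation, recorded just before the theorem, that over a Pr\"{u}fer domain flat and torsion free modules coincide, so that $\mathscr{F}_{cw}$ and $\F_{cw}$ are the same class of representations by Proposition \ref{prop:ArepresentationTorsion-freeIFFeachModuleTORSION-free}, and the cover supplied by Theorem \ref{thm:AnyRep.hasFC-cover} serves for both. Your closing observation that no appeal to (\textbf{A}), (\textbf{B}) or source injectivity is needed is also exactly the point the authors make in the remark following the theorem, where they apply it to the quiver of Example \ref{exm:QdoesntSatisfy(B)}.
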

\begin{remark}
In Example \ref{exm:QdoesntSatisfy(B)}, since $Q$ does not satisfy
(\textbf{B}) we cannot use Theorem
\ref{thm:torsion-free-coversEXISTSfor-quivers} to determine whether
$(Q, \ZMOD)$ admits $\F_{cw}$-covers. However, since $R= \Z$ is a
Pr\"{u}fer domain, $(Q, \ZMOD)$ admits $\F_{cw}$-covers by Theorem
\ref{thm:OverPruferDomain-FC-cover=F_cw-cover}.
\end{remark}

\section{Examples of comparing categorical flat covers with $\mathscr{F}_{cw}$-covers}
In this section, we will provide some examples on the different
kinds of covers studied throughout the paper.

The \emph{categorical} flat representations are characterized (for
rooted quivers) in \citet[Theorem
3.7]{Enochs-et.al:FlatCoversAndFlatRepresentationsOfOuivers} as
follows: a representation $F$ of a quiver $Q$ is \emph{flat} if and
only if $F(v)$ is a flat module and the morphism
$\oplus_{t(a)=v}F(s(a)) \to F(v)$ is a pure monomorphism for every
vertex $v\in V$. In this case, as we pointed out at the beginning of
Section \ref{sec:componentwiseFlatCovers}, it is known that $(Q,
\RMOD)$ admits categorical flat covers for any quiver $Q$. Moreover,
we have proved in Theorem \ref{thm:AnyRep.hasFC-cover} that $(Q,
\RMOD)$ also admits $\mathscr{F}_{cw}$-covers (i.e. componentwise
flat covers). In this section, we will give some examples of
categorical flat covers and of $\mathscr{F}_{cw}$-covers showing
that these two kind of covers do not coincide in general.

Recall that a  module $C$ is called \emph{cotorsion} if $\Ext^1 (F,
C) = 0$ for any flat module $F$. Since every module has a flat cover
(\citet{Bican-et.al:AllModulesHaveFlatCovers}), every module has a
cotorsion envelope by \citet[Theorem 3.4.6]{Xu:FlatCoversOfModules}.
\begin{example}\label{exm:firstExmForCat.FlatCovers}

Let $Q$ be the quiver $\bullet \to \bullet$. Let us take any module
$M$ and the flat cover $\varphi : F \to M$ of it. Then:
\begin{enumerate}
    \item
    $$\xymatrix{0 \ar[r]
\ar[d] & F \ar[d]^{\varphi}
 \\ 0 \ar[r] & M} $$ is a flat cover of the representation $0 \to
 M$. To see this, let $$\xymatrix{F_1 \ar[r]^{\alpha}
\ar[d] & F_2 \ar[d]^{\beta}
 \\ 0 \ar[r] & M }$$ be a morphism, where $\alpha$ is a pure
 monomorphism and $F_1, F_2$ are flat modules. Then $F_2 / F_1$ is
 also a flat module. Since $\varphi$ is a flat cover, there exists $\delta : F_2 \to
 F$ such that $\varphi \delta = \beta$. It is clear that $\varphi \delta \alpha = \beta \alpha =
 0$, and so there exists a unique $h: F_1 \to \Ker \varphi$ such
 that $\delta \alpha = i h$, where $i : \Ker \varphi \to F$. From the short exact sequence $0 \to F_1 \to F_2 \to F_2/F_1 \to 0$, we obtain $$\Hom(F_2, \Ker \varphi) \to \Hom(F_1, \Ker \varphi) \to 0 ,$$
 since $\Ext^1(F_2/F_1, \Ker \varphi ) = 0$ by Wakamutsu's lemma (see \citet[Lemma 2.1.1]{Xu:FlatCoversOfModules}). So there is $z: F_2 \to \Ker \varphi$
 such that $z \alpha = h$. Now if we consider $\delta - z : F_2 \to
 F$, then clearly $\varphi(\delta - z) = \beta$ and $(\delta - z)\alpha =
 0$.

    \item If we take the flat cover $f: G\to \Ker \varphi$ of $\Ker \varphi$ then
 $$\xymatrix{G \ar[r]^t
\ar[d] & F \ar[d]^{\varphi}
 \\ 0 \ar[r] & M} $$ is an $\mathscr{F}_{cw}$-cover of the representation $0 \to
 M$. In fact, if $$\xymatrix{F_1 \ar[r]^{\alpha}
\ar[d] & F_2 \ar[d]^{\beta}
 \\ 0 \ar[r] & M }$$ is a morphism where $F_1 \to F_2 \in \mathscr{F}_{cw}$,
 then clearly there exists $h: F_2 \to F$ such that $\varphi h = \beta$, since $\varphi$ is a flat
 cover. Since $\varphi h \alpha = \beta \alpha = 0$, the map $h \alpha : F_1 \to \Ker
 \varphi$ is defined. Then there exists $h' : F_1 \to G$ such that $f h' = h
 \alpha$ since $f$ is a flat cover, and so $h \alpha = t h'$. This
 shows that $\{0, \varphi\}$ is an $\mathscr{F}_{cw}$-precover. To see that
 it is a cover, suppose there is an endomorphism $$\xymatrix{G \ar[r]^t
\ar[d]_g & F \ar[d]^{g'}
 \\ G \ar[r]^t & F }$$ such that $0 g = 0$ and $\varphi g' =
 \varphi$. Then clearly $g'$ is an automorphism since $\varphi$ is a
 flat cover. Now we show that $g$ is also an automorphism. Since $\varphi g' i =
 0$, there exists $\psi : \Ker \varphi \to \Ker \varphi$ where $i:\Ker \varphi \to
 F$. Actually, $\psi$ is an automorphism (see the comment of $g$ being an automorphism in Example
 \ref{exm:torsion-freeCover}), and so from the commutative diagram $$\xymatrix{G \ar[r]^f \ar[d]_g & \Ker \varphi \ar[d]^{\psi}
 \\ G \ar[r]^f & \Ker \varphi }$$ we obtain that $g$ is also an
 automorphism (by using the fact that $f$ is a cover).
\end{enumerate}
\end{example}
\begin{remark}
Note that in the previous example $0 \to F$ cannot be an
$\mathscr{F}_{cw}$-precover of $ 0 \to M$. Because by (2), $G\to F$
is an $\mathscr{F}_{cw}$-cover of $ 0 \to M$ with $G\neq 0$ and it
is known that covers are direct summand of precovers (see
\citet[Theorem 1.2.7]{Xu:FlatCoversOfModules}). So if $0 \to F$ were
an $\mathscr{F}_{cw}$-precover of $0\to M$, then we would have
$$(0 \to F) = (G\to F)\oplus (H_1 \to H_2) = (G\oplus H_1 \to F
\oplus H_2)$$ for some representation $H_1 \to H_2$ of $Q$. This
implies $0 = G \oplus H_1$ which contradicts the fact that $G\neq
0$.
\end{remark}
\begin{remark}
Comparing with Example \ref{exm:torsion-freeCover}; $\Ker \varphi
\to F$ is a torsion free cover but not an $\mathscr{F}_{cw}$-cover
of $0 \to M$ (unless $\Ker \varphi$ is a flat module). Since the
class of torsion free modules is closed under submodules, but the
class of flat modules is not.
\end{remark}

\begin{example}
Let $Q$ be the quiver $\bullet \to \bullet$. Let us take any module
$M$ and the flat cover $\varphi : F \to M$ of it. Then,
$$\xymatrix{F \ar[r]^{id} \ar[d]_{\varphi} & F \ar[d]^{\varphi}
 \\ M \ar[r]^{id} & M} $$ is both a (categorical) flat cover and an $\mathscr{F}_{cw}$-cover of the
representation $\xymatrix{ M \ar[r]^{id} & M}$. In fact, if there is
a morphism $$\xymatrix{F_1 \ar[r]^h \ar[d]_{\psi_1} & F_2
\ar[d]^{\psi_2}
 \\ M \ar[r]^{id} & M} $$ where $F_1, F_2$ are flat modules and $h$ is a pure monomorphism, then clearly
there is $f: F_2 \to F$ such that $\varphi f = \psi_2$ (since
$\varphi$ is a flat cover). Taking $fh : F_1 \to F$, we see that
$\varphi f h = \psi_2 h = \psi_1$. This means that $\xymatrix{ F
\ar[r]^{id} & F}$ is a flat precover, and clearly it is a flat cover
(since $id_F$ is a pure monomorphism). Since we have not used the
fact that $h$ is pure, then $\xymatrix{ F \ar[r]^{id} & F}$ is also
a $\mathscr{F}_{cw}$-cover of $\xymatrix{ M \ar[r]^{id} & M}$.

\end{example}

\begin{example}
Let $Q$ be the quiver $\bullet \to \bullet \to \bullet$ and let $M$
be a module. Let us take the flat cover $\varphi: F \to M$ of $M$.
Then:

\begin{enumerate}
    \item If we take the cotorsion envelope $i: F \to C$ of $F$,
    then $C$ will be a flat module by \citet[Theorem 3.4.2]{Xu:FlatCoversOfModules}). Therefore, we have a (categorical) flat representation $\overline{F} \equiv \xymatrix{F \ar[r]^{i} & C \ar[r]^{k_1} & C \times
    F}$ where $k_1$ is canonical inclusion (since $k_1$ and $i$ are pure monomorphisms). We show that $$\xymatrix{\overline{F} \ar[d]_{\overline{\varphi}} \\ \overline{M}}
    \qquad  \xymatrix{F \ar[r]^{i} \ar[d]^{\varphi} & C \ar[r]^{k_1} \ar[d]^0 & C \times
    F \ar[d]^{\varphi p_2}\\ M \ar[r]& 0 \ar[r] & M}$$is a flat
    cover of the representation $\overline{M}$
    of $Q$, where $p_2: C\times F \to F$ is a projection. In fact,
    if there is a morphism $$\xymatrix{F_1 \ar[r]^{\alpha} \ar[d]^{t_1} & F_2 \ar[r]^{\beta} \ar[d]^0 & F_3 \ar[d]^{t_3}\\ M \ar[r]& 0 \ar[r] & M}$$
with $F_1, F_2, F_3$ are flat modules and $\alpha, \beta$ are pure
monomorphisms, then clearly there exists $f: F_1 \to F$ such that
$\varphi f = t_1$ since $F$ is a flat cover of $M$. From the short
exact sequence $0 \to F_1 \to F_2 \to F_2/F_1 \to 0$, we obtain that
$$\Hom(F_2, C) \to \Hom(F_1, C) \to 0$$ is exact, since $\Ext^1(F_2/F_1, C)=
0$ (as $\alpha$ is a pure monomorphism and $C$ is cotorsion). So
there exists $g:F_2 \to C$ such that $g \alpha = i f$. Now, since
$F$ is a flat cover of $M$, there exists $\tau_2 : F_3 \to F$ such
that $\varphi \tau_2 = t_3$. Moreover, from the short exact sequence
\begin{equation}\label{eqn:shortExactSeq}
0 \to F_2 \to F_3 \to F_3/F_2 \to 0
\end{equation}
we obtain that
$$\Hom(F_3, C) \to \Hom(F_2, C) \to 0$$ is exact. Then there exists $\tau_1: F_3 \to C$ such that
$\tau_1 \beta = g$. Since $\varphi \tau_2 \beta = t_3 \beta = 0$,
there exists a unique $\gamma: F_2 \to \Ker \varphi$ such that
$\gamma = \tau_2 \beta$. Similarly, if we take $\Ker \varphi$
instead of $C$,  by (\ref{eqn:shortExactSeq}), there exists $z : F_3
\to \Ker \varphi$ such that $z \beta = \gamma$. Therefore, by
defining $h:F_3 \to C \times F$ such that $h(x) = \big(\tau_1(x),
(\tau_2-z)(x)\big)$ for all $x\in F_3$, we see that $\varphi p_2 h =
\varphi (\tau_2-z) = t_3$, and moreover $h \beta = \big(\tau_1
\beta, (\tau_2-z) \beta \big) = (g, 0)= k_1 g$ . Thus
$\overline{\varphi}:\overline{F} \to \overline{M}$ is a flat
precover. To see that it is a cover, suppose $s = \{f, g, h\}:
\overline{F} \to \overline{F}$ is an endomorphism such that
$\overline{\varphi} \circ s = \overline{\varphi}$. It is clear that
$f$ and $g$ are automorphisms. For $h: C\times F \to C\times F$, we
set $h_{ij} = \pi_i h e_j$ for $i, j = 1, 2$ where $\pi_k$ is
projection and $e_k$ is injection. We can write $h$ in a matrix form
as
$$\left( \begin{array}{cc}
h_{11} & h_{12} \\
h_{21} & h_{22}
\end{array} \right) .$$ Since $h k_1 = k_1 g$ then $h_{11}=g$ and $h_{21}=
0$, and since $\varphi$ is a cover, $\varphi = \varphi h_{22}$
implies that $h_{22}$ is an automorphism. Hence $h$ is an
automorphism and so is $s$, that is, $\overline{\varphi}$ is a
cover.

    \item If we take the flat cover $f: G \to \Ker \varphi$ of $\Ker \varphi$, then it is
    immediate that $$\xymatrix{F \ar[r]^0 \ar[d]_{\varphi} &  G \ar[r]^t \ar[d]_0 & F \ar[d]_{\varphi}  \\
    M \ar[r]^0 & 0 \ar[r]^0  &  M}$$ is an $\mathscr{F}_{cw}$-cover of the
    representation $M \to 0 \to M$.
\end{enumerate}
\end{example}

\bibliographystyle{deufen}
\bibliography{kaynaklar-Selahattin}
\end{document}